\documentclass[a4paper]{article}

%
\usepackage[inner=30mm,outer=30mm,textheight=225mm]{geometry}

%
\usepackage{amsmath}      
\usepackage{amssymb}      
\usepackage{enumerate}    
\usepackage{graphicx}     
\usepackage{theorem}      
\usepackage{url}          

%
\newtheorem{theorem}{Theorem}[section]
\newtheorem{defn}[theorem]{Definition}
\newtheorem{lemma}[theorem]{Lemma}
\newtheorem{algorithm}[theorem]{Algorithm}

%
\newcommand{\prooflabel}{Proof}
\newcommand{\qed}{\hfill\rule[-0.5mm]{1.5mm}{3.0mm}}
\newtheorem{proofthm}{\prooflabel}
\newenvironment{proof}{\begin{proofthm} \em}{\qed \end{proofthm}}

%
\newcommand{\cpp}{C\nolinebreak\hspace{-.05em}\raisebox{.4ex}{\tiny\bf
    +}\nolinebreak\hspace{-.10em}\raisebox{.4ex}{\tiny\bf +}}
\newcommand{\ccpp}{C/\cpp}
\newcommand{\fxrays}{{\em FXrays}}
\newcommand{\mgap}{\phantom{-}}
\newcommand{\normalsw}{{\em Normal}}
\newcommand{\R}{\mathbb{R}}
\newcommand{\rank}[1]{\mathrm{rank}(#1)}
\newcommand{\regina}{{\em Regina}}

\newcommand{\trialplots}[1]{
    \centerline{\begin{tabular}{cc}
        \includegraphics[width=6.2cm]{#1-shyp.eps} &
        \includegraphics[width=6.2cm]{#1-qhyp.eps} \smallskip \\
        \includegraphics[width=6.2cm]{#1-stwist.eps} &
        \includegraphics[width=6.2cm]{#1-qtwist.eps}
    \end{tabular}}
}

%
\title{Optimizing the Double Description Method for \\
    Normal Surface Enumeration}
\author{Benjamin A.~Burton}

\date{\today}

\begin{document}

\maketitle

\abstract{Many key algorithms in 3-manifold topology involve the
    enumeration of normal surfaces, which is based upon the double
    description method for finding the vertices of a convex polytope.
    Typically we are only interested in a small subset of these vertices,
    thus opening the way for substantial optimization.
    Here we give an account of the vertex enumeration problem as it
    applies to normal surfaces, and present new optimizations that yield
    strong improvements in both running time and memory consumption.
    The resulting algorithms are tested using the freely available
    software package {\regina}.}

\medskip
\noindent {\bf AMS Classification}\quad 52B55 (57N10, 57N35)

\section{Introduction} \label{s-intro}

Some of the most fundamental problems in 3-manifold topology are
algorithmic, such as determining the structure of a given space,
or deciding whether two spaces are topologically equivalent.
Much progress has been made on these problems;
notable examples include the unknot recognition algorithm of
Haken \cite{haken61-knot}, the 3-sphere recognition algorithm of
Rubinstein and Thompson
\cite{rubinstein95-3sphere,rubinstein97-3sphere,thompson94-thinposition},
the connected sum and JSJ decomposition algorithms of Jaco and Tollefson
\cite{jaco95-algorithms-decomposition}, and the solution to the homeomorphism
problem for Haken manifolds, developed by Haken \cite{haken62-homeomorphism}
and completed by Jaco and Oertel \cite{jaco84-haken} and
Hemion \cite{hemion92}.

Several recurring themes are found in these and many other topological
algorithms:
(i) they are extremely slow, (ii) they are extremely difficult to implement,
and (iii) they are all based on {\em normal surface theory}.

The reason normal surface theory is so prevalent is because it allows
topological existence problems to be converted into vertex enumeration
problems on polytopes, which (being numerical and discrete) are far simpler
to work with algorithmically.

Unfortunately this very technique that makes these problems approachable
also makes the resulting algorithms impractically slow for all but the
smallest 3-manifolds.  This is because vertex enumeration can grow
exponentially slow in the dimension of the polytope \cite{dyer83-complexity},
which equates to exponentially slow in the complexity of the 3-manifold.

Any practical implementation therefore requires a
highly optimized vertex enumeration algorithm.  Vertex enumeration
algorithms fall into two broad categories: those based on the double
description method of Motzkin et al.~\cite{motzkin53-dd}, and those based
on pivoting, as described for example by Dyer \cite{dyer83-complexity}.
Both classes of algorithm have been analyzed and optimized in the literature;
see for instance the optimized double description methods of Fukuda
and Prodon \cite{fukuda96-doubledesc}, or the recent
lexicographic pivoting method of Avis \cite{avis00-revised}.

If we restrict our focus to topological problems, there are further
gains to be had.  Essentially, we can exploit the fact that normal
surface algorithms typically require only a small number of polytope
vertices, namely those that correspond to {\em embedded} surfaces in the
underlying 3-manifold.  We therefore have permission to ignore ``most''
of the vertices of the polytope, which opens the door to substantial
improvements in efficiency.

The purpose of this paper is twofold.  First we give a full description of
the standard normal surface enumeration algorithm, which combines the double
description method with the filtering method of Letscher---although well
known, there is no account of this algorithm in the present literature.
We then improve this algorithm by combining techniques from the
literature with new ideas, cutting both running time and memory usage by
orders of magnitude as a result.

We focus only on the double description method in this paper.  Pivoting
algorithms are certainly appealing, particularly because of their
extremely low memory footprint \cite{avis00-revised,avis92-pivot}.
However, it is difficult to exploit the embedded surface constraints
with these algorithms.  We discuss this in more detail in Section~\ref{s-dd}.

On a practical note, there are two well-known implementations for the
enumeration of normal surfaces: {\fxrays} \cite{fxrays}, by Culler and
Dunfield, and {\regina} \cite{regina,burton04-regina}, by the author.
Both are freely available under the GNU General Public License.
David Letscher wrote a proof-of-concept program {\normalsw} in 1999 that
preceded both implementations, but his software is no longer available.

Each implementation has different design goals.  {\fxrays} uses highly
streamlined code and data structures, and is very fast for the problems
that it is designed for.  {\regina} on the other hand is more failsafe and
applicable to a wider range of problems, but pays a penalty in both time
and memory usage.  As an example,
{\fxrays} uses native integers where {\regina} uses
arbitrary precision arithmetic, which makes {\fxrays} faster and smaller
but also at risk of integer overflow (which it detects but
cannot overcome).  {\regina} also uses slower filtering methods, but
these generalize well to the sister problem of {\em almost normal surface}
enumeration, which appears in some of the high-level topological
algorithms mentioned earlier.

Since our concern here is the underlying algorithms, we focus on a
single implementation (in this case, {\regina}).
All of the improvements described here have been built
into {\regina} version~4.5.1,
released in October 2008, and it is pleasing to see that this new code
enjoys significantly better time and memory performance than has been
seen in either software package in the past.

The remainder of this paper is structured as follows.
Section~\ref{s-normal} begins with an outline of normal surface theory,
focusing on its connections to polytope vertex enumeration.  In
Section~\ref{s-dd} we describe the double description method, and
explain how the filtering method of Letscher allows us to concentrate only
on embedded surfaces.  Section~\ref{s-opt} presents a series of
implementation techniques and algorithmic improvements that further
optimize these core algorithms.
These optimizations are put to the test in Section~\ref{s-expt}
with experimental measurements of running time and
memory consumption, and
Section~\ref{s-conc} concludes with a summary of our findings.

The author is indebted to Bernard Blackham for his helpful suggestions
regarding micro-optimization, and for highlighting the excellent
references \cite{drepper07-memory,warren02-hackers-delight} on this topic.
Thanks must also go to the University of Melbourne for their continued
support of the development of {\regina}.

\section{Normal Surface Theory} \label{s-normal}

Normal surfaces were introduced by Kneser \cite{kneser29-normal}, and
further developed by Haken \cite{haken61-knot,haken62-homeomorphism} for
use with the unknot recognition problem and the homeomorphism problem.
They are now commonplace in recognition and decomposition algorithms,
and more recently have found applications in simplification algorithms
\cite{jaco03-0-efficiency}.

From a practical perspective, many of these
algorithms are extremely messy and difficult to implement, due to the
complex geometric operations involved and the myriad of problematic cases.
Some have only recently been implemented in practice, such as
the 3-sphere recognition and connected sum decomposition algorithms
in {\regina}; others, such as JSJ decomposition
or Haken's homeomorphism algorithm, have never been
implemented at all.  Recent techniques have been developed to reduce
both the difficulty and inefficiency of these algorithms; examples
include Tollefson's quadrilateral space \cite{tollefson98-quadspace},
the crushing method of Jaco and Rubinstein \cite{jaco03-0-efficiency},
and the ``guts'' analysis of Jaco et al.~\cite{jaco02-algorithms-essential}.

Since the focus of this paper is on the double description method, we offer
very little topological background, concentrating instead on the linear
programming aspects of normal surface theory.  For a more extensive
review of normal surfaces, the reader is referred to \cite{hass99-knotnp}
or \cite{hemion92}.

\subsection{Triangulations and Normal Surfaces}

The key topological structures that we work with in this paper are
{\em triangulations} and {\em normal surfaces}.  We proceed to define
each of these in turn.

Triangulations are representations of 3-manifolds that are ideal for
computation.  They are discrete structures, and they are very
general---it is usually a simple matter to convert some other
description of a 3-manifold (such as a Heegaard splitting or
a Dehn filling) into a triangulation, whereas the
other direction is often more difficult.
Each of the high-level topological algorithms listed in the introduction
takes a 3-manifold triangulation as input.

\begin{defn}[Triangulation]
    A {\em 3-manifold triangulation of size $n$} is a finite collection of
    $n$ tetrahedra, where some or all of the $4n$ tetrahedron faces are
    affinely identified in pairs, and where the resulting topological space
    forms a 3-manifold.  We allow identifications between different faces
    of the same tetrahedron, and likewise with edges and vertices.  By a
    {\em face, edge or vertex of the triangulation}, we refer to an
    equivalence class of tetrahedron faces, edges or vertices
    under these identifications.
\end{defn}

\begin{figure}[htb]
\centerline{\includegraphics[scale=0.6]{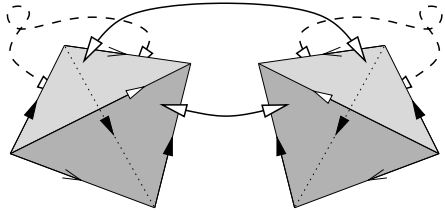}}
\caption{A two-tetrahedron triangulation of $S^2 \times S^1$}
\label{fig-s2xs1}
\end{figure}

As an example, Figure~\ref{fig-s2xs1} shows a size two triangulation of the
product space $S^2 \times S^1$.  In each tetrahedron
the two rear faces are identified with a twist, and the two front faces
of one tetrahedron are identified with the two front faces of the other.
The triangulation only has one vertex (since all eight tetrahedron
vertices are identified) and it has three distinct edges, indicated
in the diagram by three different styles of arrowhead.

Normal surfaces are two-dimensional surfaces within a triangulation that
intersect individual tetrahedra in a well-controlled fashion.
These well-controlled intersections, defined in terms of normal discs,
make it easier to analyze and
search for surfaces that tell us about the topology of the
underlying manifold.

\begin{defn}[Normal Disc]
    Let $\Delta$ be a tetrahedron in a 3-manifold triangulation.
    A {\em normal disc} in $\Delta$ is a topological disc embedded in
    $\Delta$ which does not touch any vertices of $\Delta$, whose
    interior lies in the interior of $\Delta$, and whose boundary
    consists of either (i) three arcs, running across the three faces
    surrounding some vertex, or (ii) four arcs, running across all four
    faces of the tetrahedron.  Discs with three or four boundary arcs
    are called {\em triangles} or {\em quadrilaterals} respectively.
    Several normal discs are illustrated in Figure~\ref{fig-normaldiscs}.

    \begin{figure}[htb]
    \centerline{\includegraphics[scale=0.7]{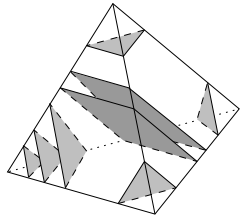}}
    \caption{Normal discs inside a tetrahedron}
    \label{fig-normaldiscs}
    \end{figure}

    There are seven different {\em types} of normal disc within a
    tetrahedron, defined by the choice of tetrahedron edges that a disc
    intersects.  In particular, there are four triangle types (each
    meeting three edges) and three quadrilateral types (each meeting
    four edges), as illustrated in Figure~\ref{fig-normaltypes}.

    \begin{figure}[htb]
    \centerline{\includegraphics[scale=0.5]{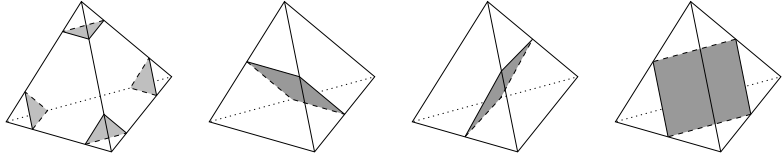}}
    \caption{The seven different types of normal disc in a tetrahedron}
    \label{fig-normaltypes}
    \end{figure}
\end{defn}

\begin{defn}[Normal Surface]
    Let $\mathcal{T}$ be a 3-manifold triangulation.  An {\em embedded normal
    surface} in $\mathcal{T}$ is a properly embedded surface in $\mathcal{T}$
    that meets each tetrahedron in a collection of disjoint normal discs.
    Here we also allow disconnected surfaces (i.e., disjoint unions of
    smaller surfaces).
\end{defn}

\begin{figure}[htb]
\centerline{\includegraphics[scale=0.6]{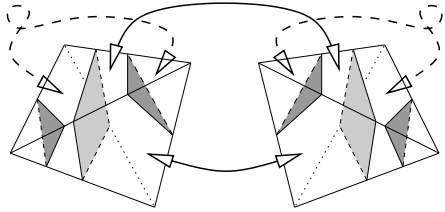}}
\caption{An embedded normal surface inside the triangulation
    of $S^2 \times S^1$}
\label{fig-s2xs1-normal}
\end{figure}

To illustrate, Figure~\ref{fig-s2xs1-normal} shows an embedded
normal surface inside the triangulation of $S^2 \times S^1$ that was
discussed earlier.
The identifications of tetrahedron faces cause the six normal discs to join
together to form a 2-sphere (which turns out to be a 2-sphere
at one ``level'' of the product $S^2 \times S^1$).

\subsection{The Projective Solution Space}

A key strength of normal surfaces is their ability to bridge the worlds
of 3-manifold topology and linear algebra.  We do this through the
vector representation of a normal surface, which is defined below.

Throughout this section, we assume that $\mathcal{T}$ is a triangulation
with $n$ tetrahedra, labelled $\Delta_1,\ldots,\Delta_n$.  For each
tetrahedron, we arbitrarily number its triangular normal disc types $1,2,3,4$
and its quadrilateral normal disc types $1,2,3$.

\begin{defn}[Vector Representation] \label{d-vecrep}
    Let $S$ be an embedded normal surface within triangulation $\mathcal{T}$.
    For each tetrahedron $\Delta_i$, let
    $t_{i,j}$ be the number of triangular discs of the $j$th type
    contained in $S$ ($j=1,2,3,4$), and let $q_{i,k}$ be the number of
    quadrilateral discs of the $k$th type contained in $S$ ($k=1,2,3$).
    Then the {\em vector representation} of the surface $S$ is the
    $7n$-dimensional vector
    \[ (
        \ t_{1,1},t_{1,2},t_{1,3},t_{1,4},\ q_{1,1},q_{1,2},q_{1,3}\ ;
        \ t_{2,1},t_{2,2},t_{2,3},t_{2,4},\ q_{2,1},q_{2,2},q_{2,3}\ ;
        \ \ldots,q_{n,3}\ ). \]
\end{defn}

Essentially the vector representation merely counts the number of normal
discs of each type in each tetrahedron.
As shown by Haken \cite{haken61-knot}, this gives enough information to
uniquely identify the surface, since there is only one way to glue the
normal discs together without causing the surface to intersect itself:

\begin{lemma}
    Let $S_1$ and $S_2$ be embedded normal surfaces in triangulation
    $\mathcal{T}$.  If the vector representations of $S_1$ and $S_2$ are
    identical, then the surfaces $S_1$ and $S_2$ are ambient isotopic.
\end{lemma}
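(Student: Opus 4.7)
The plan is to reduce the claim to a tetrahedron-by-tetrahedron analysis and then show that the resulting local ambient isotopies can be glued together across the face identifications of $\mathcal{T}$. The underlying geometric fact I would exploit is that inside a single tetrahedron, normal discs of the same type are parallel, and different types have essentially no freedom to be arranged in inequivalent ways.

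First I would analyse the configuration inside a single tetrahedron $\Delta_i$. Any two normal discs of the same type in $\Delta_i$ are parallel and cobound a product region, so a disjoint collection of $t_{i,j}$ triangles of type~$j$ is simply a stack, unique up to an ambient isotopy of $\Delta_i$. The three quadrilateral types pairwise intersect, so at most one quadrilateral type can occur in an embedded normal surface restricted to $\Delta_i$; equal counts $q_{i,1},q_{i,2},q_{i,3}$ therefore force $S_1$ and $S_2$ to use the same quadrilateral type with the same multiplicity in $\Delta_i$. The four triangular stacks and the (at most one) quadrilateral stack are mutually disjoint, so their entire arrangement inside $\Delta_i$ is determined up to an ambient isotopy of $\Delta_i$ by the seven counts $t_{i,1},\ldots,t_{i,4},q_{i,1},q_{i,2},q_{i,3}$.

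Next I would check that $S_1$ and $S_2$ agree on each tetrahedron face in a way strong enough for the local isotopies to glue. On a triangular face $F$, the intersection with a normal surface is a disjoint union of \emph{normal arcs} classified into three types, one per vertex of $F$. The number of arcs of each type is fixed by the vector representation, since every triangular disc contributes one arc to each of its three incident faces and every quadrilateral disc contributes one arc to each of its four. A standard planar argument (disjoint normal arcs in a triangle must be nested near the three corners) then shows that such an arc pattern on $F$ is unique up to an ambient isotopy of $F$ fixing $\partial F$.

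Finally I would glue. I would first fix, on every face of $\mathcal{T}$, an ambient isotopy of that face taking the arc pattern of $S_2$ to the arc pattern of $S_1$. I would then choose the tetrahedron-interior isotopies from the first step so that they restrict to these prescribed boundary isotopies on each face of each tetrahedron; the non-uniqueness inherent in the first step gives enough flexibility to do this. The resulting local isotopies assemble into a global ambient isotopy of $\mathcal{T}$ carrying $S_2$ to $S_1$. The hard part will be precisely this gluing step: the within-tetrahedron isotopies from the first step are highly non-canonical, and the real content of the lemma is that they can be coordinated across all face identifications simultaneously. Fixing the face arc patterns first and extending inward in each tetrahedron is what makes the choices coherent.
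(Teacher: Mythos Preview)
Your outline is a correct sketch of the standard argument for this classical fact. However, the paper does not actually prove this lemma: it simply attributes the result to Haken and offers a one-line intuitive remark (``there is only one way to glue the normal discs together without causing the surface to intersect itself'') before stating the lemma without proof. So there is no paper proof to compare against; your proposal supplies the missing details rather than paralleling or diverging from anything in the text.

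That said, since you have written out an argument, a brief comment on its content: the decomposition into (i) uniqueness of disc stacks within each tetrahedron, (ii) uniqueness of normal arc patterns on each face, and (iii) gluing the local isotopies is exactly the right structure, and matches the standard treatment in the normal surface literature. Your identification of the gluing step as the delicate part is accurate; the usual way to make it rigorous is to build the isotopy skeleton-by-skeleton (first on vertices, then edges, then faces, then tetrahedron interiors), at each stage extending rel the previous skeleton, which is precisely the ``fix the boundary data first, then extend inward'' strategy you describe. Nothing in your outline is wrong or missing a key idea.
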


While every normal surface has a vector representation, there are of course
$7n$-dimen\-sional vectors that do not correspond to any embedded normal
surface at all.  It is therefore useful to pin down necessary and
sufficient conditions on the vector representation.

\begin{defn}[Admissible Vector] \label{d-admissible}
    Let $\mathbf{v} = ( \ t_{1,1},t_{1,2},t_{1,3},t_{1,4},
        \ q_{1,1},q_{1,2},q_{1,3}\ ; \ \ldots,q_{n,3}\ )$
    be a $7n$-dimensional vector of integers.
    This vector is {\em admissible} if it satisfies the following constraints:
    \begin{itemize}
        \item {\em Non-negativity:} Every coordinate of $\mathbf{v}$ is
        non-negative.
        \item {\em Matching equations:} Consider any two tetrahedron
        faces that are identified in the triangulation; suppose that the
        relevant tetrahedra are $\Delta_i$ and $\Delta_j$ (where $i=j$ is
        allowed).  Let $F$ denote the resulting face of the triangulation,
        and let $e$ be any one of the three edges surrounding face $F$.
        We obtain an equation from $F$ and $e$ as follows.

        Precisely one of the four triangle types and one of the three
        quadrilateral types in each of $\Delta_i$ and $\Delta_j$ meets
        $F$ in arcs parallel to $e$.  Let $t_{i,a}$, $q_{i,b}$, $t_{j,c}$
        and $q_{j,d}$ be the corresponding coordinates of $\mathbf{v}$.
        Then it is true that $t_{i,a} + q_{i,b} = t_{j,c} + q_{j,d}$.

        \item {\em Quadrilateral constraints:} For each $i \in \{1,\ldots,n\}$,
        at most one of the coordinates $q_{i,1}$, $q_{i,2}$ and $q_{i,3}$ is
        non-zero.
    \end{itemize}
\end{defn}

It is straightforward to see that the vector representation of any
embedded normal surface must be admissible:
\begin{itemize}
    \item Non-negativity is clear because the vector representation
    counts discs.
    \item The matching equations express the fact that we must be
    able to glue together discs from adjacent tetrahedra.
    This is illustrated in Figure~\ref{fig-normaldiscsadj}, where we see
    one triangle and one quadrilateral from $\Delta_i$ meeting two
    triangles from $\Delta_j$.  The corresponding matching equation,
    derived from face $F$ and edge $e$,
    states that $t_{i,a} + q_{i,b}$ ($1 + 1$) equals
    $t_{j,c} + q_{j,d}$ ($2 + 0$).

    \begin{figure}[htb]
    \centerline{\includegraphics{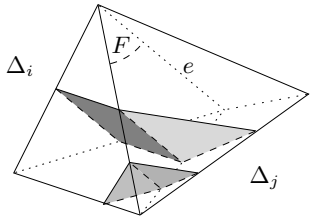}}
    \caption{The matching equations at work}
    \label{fig-normaldiscsadj}
    \end{figure}

    \item The quadrilateral constraints arise because any two quadrilaterals
    of different types in the same tetrahedron must intersect; this
    would make the resulting surface non-embedded.
\end{itemize}
A more interesting result of Haken \cite{haken61-knot} is that this
implication works both ways:

\begin{theorem} \label{t-admissible}
    Let $\mathbf{v}$ be a $7n$-dimensional integer vector that is
    not the zero vector.  Then $\mathbf{v}$ is the vector representation
    of an embedded normal surface in the triangulation $\mathcal{T}$
    if and only if $\mathbf{v}$ is admissible.
\end{theorem}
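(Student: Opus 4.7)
\emph{Proof sketch.} The ``only if'' direction has already been verified in the discussion preceding the theorem, so I focus on the converse. Given an admissible nonzero integer vector $\mathbf{v}$, the plan is to build an embedded normal surface $S$ with vector representation $\mathbf{v}$, first by realising the prescribed discs inside each tetrahedron, and then by gluing across identified faces using the matching equations.

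First I would work tetrahedron by tetrahedron. Fix $\Delta_i$, and for each $j \in \{1,2,3,4\}$ place $t_{i,j}$ mutually parallel triangles of type $j$ in a small neighbourhood of the corresponding vertex. Let $k$ be the unique index (if any) permitted by the quadrilateral constraint, and place $q_{i,k}$ mutually parallel quadrilaterals of type $k$ in the region between the four vertex neighbourhoods. Non-negativity makes these counts meaningful; triangles near distinct vertices are automatically disjoint; parallel copies within a single family are disjoint by construction; and by pushing the triangles sufficiently close to their vertices the single quadrilateral family can also be made disjoint from every triangle. The quadrilateral constraint is exactly what rules out the one unavoidable obstruction to disjointness, namely that two quadrilaterals of different types in the same tetrahedron must intersect.

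Next I would glue across identified faces. Each tetrahedron face $F$ is cut by the discs into a collection of arcs, each parallel to one of the three edges of $F$; a direct count shows that the number of such arcs parallel to a given edge $e$, as seen from $\Delta_i$, equals $t_{i,a}+q_{i,b}$ in the notation of Definition~\ref{d-admissible}. Thus for any pair of identified tetrahedron faces, the matching equations applied to the three edges of the resulting face guarantee that these three arc counts agree on both sides. Since on a triangle the configuration of such ``normal'' arcs is uniquely determined (up to isotopy fixing the three edges setwise) by the counts in the three directions, the arc patterns on the two sides can be isotoped to coincide and then glued. The resulting surface $S$ has the prescribed discs as its intersection with each $\Delta_i$, so its vector representation is $\mathbf{v}$; and since $\mathbf{v}$ is nonzero $S$ is nonempty.

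The step I expect to require the most care is this final arc-matching: one must check not merely that the counts agree, but that the actual arc patterns on the two sides of an identified face can be made to coincide by an ambient isotopy of each triangular face, so that the gluing yields a genuinely embedded surface rather than only a formal pairing of boundary points. This reduces to the combinatorial observation that, in a triangle, any disjoint collection of properly embedded arcs each of which is parallel to one of the three edges is determined up to isotopy by the three directional counts---a small but genuinely necessary lemma, and the one place where the argument rests on something more than a direct translation of the admissibility conditions.
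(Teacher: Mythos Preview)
The paper does not actually prove this theorem: it is stated without proof and attributed to Haken \cite{haken61-knot}, with only the ``only if'' direction explained informally in the paragraph preceding it. So there is no in-paper proof to compare against.

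That said, your sketch is essentially Haken's original construction and is correct. You have identified the three places where each admissibility condition does real work (non-negativity to make the disc counts meaningful, the quadrilateral constraint to guarantee disjointness within a tetrahedron, and the matching equations to make the arc patterns agree across identified faces), and you have correctly isolated the one genuinely nontrivial step: that a system of disjoint normal arcs in a triangle is determined up to isotopy by the three directional counts. Nothing is missing; the only refinement worth mentioning is that after gluing one should note that the result is a properly embedded \emph{surface} (a 2-manifold), which follows because every point has a disc neighbourhood---trivially in tetrahedron interiors, by the arc-matching on face interiors, and by a local check at triangulation edges and vertices (the last being vacuous since normal discs avoid vertices).
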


It follows that, if we can characterize
the non-negative solutions to the matching equations and the quadrilateral
constraints, then we can completely characterize the space of embedded
normal surfaces.
That is, we can effectively convert topological questions into {\em algebraic}
questions, granting us access to a wealth of knowledge in linear algebra
and linear programming.

\begin{defn}[Projective Solution Space] \label{d-proj}
    Let $N \subseteq \R^{7n}$ be the set of vectors whose coordinates
    are non-negative and which satisfy the matching equations of
    Definition~\ref{d-admissible}.  Since the matching equations define
    a linear subspace of $\R^{7n}$, it follows that $N$ is a
    convex polyhedral cone with the origin as its vertex.

    Let $P \subseteq N$ be the set of vectors in $N$ whose coordinates
    sum to one, that is, the intersection of the cone $N$ with the
    hyperplane $\sum t_{i,j} + \sum q_{i,k} = 1$.  Then $P$ is a (bounded)
    convex polytope in $\R^{7n}$, and is called the {\em projective
    solution space} for the original triangulation $\mathcal{T}$.
\end{defn}

The importance of the projective solution space comes from
the following observation:
For many definitions of ``interesting'', if a 3-manifold contains an
interesting surface, then (with some rescaling) such a surface must
appear {\em at a vertex of the projective solution space}.
For instance, this is true of
essential discs and spheres \cite{jaco95-algorithms-decomposition}, and of
two-sided incompressible surfaces \cite{jaco84-haken}.
This immediately yields an algorithm for testing whether an ``interesting''
surface exists:
\begin{itemize}
    \item Enumerate the (finitely many) vertices of the projective
    solution space;
    \item For each vertex that satisfies the quadrilateral constraints,
    reconstruct the corresponding normal surface\footnote{Of course
    many integer vectors scale down to the same vertex; however, we can
    usually restrict our attention to the smallest such vector and
    possibly its double cover.} and test whether it is interesting.
\end{itemize}
This fundamental process sits at the core of every high-level
topological algorithm described in the introduction, and many more
besides.

As an implementation note, Tollefson \cite{tollefson98-quadspace} shows that
in many scenarios the much smaller space $\R^{3n}$ can be used, where we
define vectors that have only quadrilateral coordinates.
The matching equations look
different, but the overall procedure is much the same.  The results
presented in this paper apply equally well to both Tollefson's
quadrilateral space and the
standard framework described above, and so we direct the reader to
papers such as \cite{jaco03-0-efficiency} or \cite{tollefson98-quadspace}
for further details.

\section{The Double Description Method} \label{s-dd}

As described in the previous section, many high-level topological
algorithms have at their core a polytope vertex enumeration procedure.
Specifically, we must enumerate the vertices of the {\em projective solution
space}.  If we let $d$ denote the dimension of the surrounding vector space
(so $d=7n$ in the framework of the previous section, or $3n$ in Tollefson's
quadrilateral space), then the projective solution space is a convex
polytope formed by the intersection of:
\begin{itemize}
    \item the {\em non-negative orthant} in $\R^d$, defined as
    $O = \{ \mathbf{x} \in \R^d\,|\,x_i \geq 0\ \mbox{for all $i$}\}$;
    \item the {\em projective hyperplane}, defined as
    $J = \{ \mathbf{x} \in \R^d\,|\,\sum x_i = 1\}$;
    \item the {\em matching hyperplanes} $H_1,\ldots,H_g$, where each
    hyperplane $H_i$ contains all solutions to the $i$th matching equation.
    We write the $i$th matching equation as
    $\mathbf{m}^{(i)} \cdot \mathbf{x} = 0$ for some coefficient vector
    $\mathbf{m}^{(i)} \in \R^d$, and we assume there are $g$ matching equations
    in total.
\end{itemize}

Here we have replaced the triangle and quadrilateral coordinates
$t_{i,j}$ and $q_{i,k}$ of the previous section with generic
coordinates $\mathbf{x} = (x_1,\ldots,x_d)$.  This becomes
convenient from here onwards, reflecting the fact
that we have stepped out of the world of topology and into the world
of linear programming.

The one glaring omission from the list above is the quadrilateral
constraints.  They do not feature in the definition of the
projective solution space because they break desirable properties
such as convexity and even connectedness.  Nonetheless, they play a
critical role in the enumeration algorithm; we return to this shortly.

This section is structured as follows.  We begin in
Section~\ref{s-dd-simple} with an overview of the classical double
description method as it applies to the projective solution space.
In Section~\ref{s-dd-filter} we incorporate the quadrilateral
constraints using the filtering method of Letscher, and
Section~\ref{s-dd-complexity} follows with a discussion of how bad the
performance can become
and why we do not consider alternative pivoting methods instead.

\subsection{A Simple Implementation} \label{s-dd-simple}

The {\em double description method}, devised by
Motzkin et al.~\cite{motzkin53-dd} in the 1950s and refined by many
authors since, is an incremental vertex enumeration algorithm.  The
input is a polytope described as an intersection of half-spaces and/or
hyperplanes, and the output is this same polytope described as the
convex hull of its vertices.  It takes on many guises; the flavour we
describe here is the one most convenient for the problem at hand.

\begin{algorithm}[Double Description Method] \label{a-dd}
    Recall that the projective solution space is defined to be the
    intersection $P = O \cap J \cap H_1 \cap H_2 \cap \ldots \cap H_g$,
    where $O$ is the non-negative orthant in $\R^d$, $J$ is the
    projective hyperplane $\sum x_i = 1$, and each $H_i$ is a
    matching hyperplane.

    Define a series of ``working polytopes'' $P_0,P_1,\ldots,P_g$,
    where $P_0 = O \cap J$ and
    $P_i = O \cap J \cap H_1 \cap H_2 \cap \ldots \cap H_i$ for each $i>0$.
    The following inductive algorithm computes the vertices of each
    polytope $P_i$ in turn:

    \begin{enumerate}[1.]
        \item Fill the set $V_0$ with the $d$ unit vectors in $\R^d$.
        Note that $V_0$ is the vertex set for the polytope
        $P_0 = O \cap J$, which is merely the unit simplex in $\R^d$.

        \item \label{en-ddinduct}
        For each $i=1,2,\ldots,g$ in turn, construct a new set $V_i$
        containing the vertices of the polytope $P_i$ as follows:
        \begin{enumerate}[(a)]
            \item Note that $V_{i-1}$ already contains the vertices of
            the previous polytope $P_{i-1}$.  Partition these old vertices
            into three temporary sets $S_0$, $S_+$ and $S_-$,
            containing those $\mathbf{v} \in V_{i-1}$ for which
            $\mathbf{m}^{(i)} \cdot \mathbf{v} = 0$,
            $\mathbf{m}^{(i)} \cdot \mathbf{v} > 0$ and
            $\mathbf{m}^{(i)} \cdot \mathbf{v} < 0$ respectively.
            In other words, $S_0$, $S_+$ and $S_-$ contain those vertices
            in $V_{i-1}$
            that lie in, above and below the hyperplane $H_i$ respectively.
            \item Put the contents of $S_0$ directly into the new vertex
            set $V_i$.
            \item \label{en-ddinduct-pair}
            For each pair $\mathbf{u} \in S_+$ and $\mathbf{w} \in S_-$,
            if $\mathbf{u}$ and $\mathbf{w}$ are adjacent in the
            old polytope $P_{i-1}$ then add the intersection point
            $\overline{\mathbf{u} \mathbf{w}} \cap H_i$ to the new
            vertex set $V_i$.
        \end{enumerate}

        Once steps (a)--(c) are complete, $V_i$ is the vertex set
        for the polytope $P_i$ as required.
        Increment $i$ and proceed to the next iteration of the loop.
    \end{enumerate}

    \noindent 
    Upon completion of this algorithm, the vertices of the
    projective solution space $P = P_g$ can be found in the final set $V_g$.
\end{algorithm}

\begin{figure}[htb]
\centerline{\includegraphics[scale=0.85]{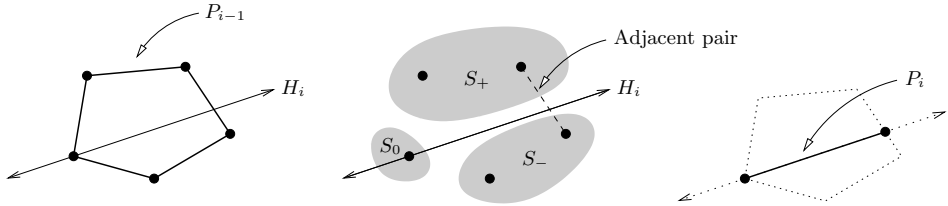}}
\caption{The inductive step of the double description method}
\label{fig-ddinduct}
\end{figure}

The double description method is so named because at each
stage it creates a ``double description'' of the working polytope
$P_i$, both as the intersection $O \cap J \cap H_1 \cap \ldots \cap H_i$
and as the convex hull of the vertex set $V_i$.
The key inductive process of step~\ref{en-ddinduct} is depicted
graphically in Figure~\ref{fig-ddinduct}.

There is one critical detail missing from this algorithm---we need some
way of deciding whether two vertices $\mathbf{u},\mathbf{w} \in V_i$ are
adjacent in the working polytope $P_i$.  There are two primary methods,
one algebraic and one combinatorial.  Both methods are described by
Fukuda and Prodon \cite[Proposition~7]{fukuda96-doubledesc}; we
translate them here into the language of the projective solution space.

\begin{defn}[Zero Set] \label{d-zeroset}
    Consider any point $\mathbf{x} \in \R^d$.  The {\em zero set} of
    $\mathbf{x}$, denoted $Z(\mathbf{x})$, is the set of indices at
    which $\mathbf{x}$ has zero coordinates.  That is,
    $Z(\mathbf{x}) = \{k\,|\,x_k=0\}$.
\end{defn}

Zero sets are important because the non-negative orthant is bounded by
the hyperplanes $x_k=0$.  Thus $Z(\mathbf{x})$ indicates which facets of
the non-negative orthant the point $\mathbf{x}$ belongs to.

\begin{lemma}[Algebraic Adjacency] \label{l-algadj}
    Consider some polytope $P_i$ with vertices
    $\mathbf{u},\mathbf{w}$ in Algorithm~\ref{a-dd}.
    Then $\mathbf{u}$ and $\mathbf{w}$ are adjacent in $P_i$ if and only if
    the intersection of $H_1 \cap \ldots \cap H_i$ with the hyperplanes
    $\{ x_k = 0\,|\,k \in Z(\mathbf{u}) \cap Z(\mathbf{w}) \}$ forms a
    subspace of dimension two.
\end{lemma}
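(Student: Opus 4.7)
The plan is to reduce the adjacency question in $P_i$ to a question about the linear cone $N = O \cap H_1 \cap \ldots \cap H_i$, of which $P_i$ is a transverse slice by the projective hyperplane $J$. Every $k$-dimensional face of $N$ meets $J$ in a $(k-1)$-dimensional face of $P_i$, so the vertices $\mathbf{u},\mathbf{w}$ of $P_i$ correspond to the one-dimensional extreme rays $\R_{\geq 0}\mathbf{u}$ and $\R_{\geq 0}\mathbf{w}$ of $N$, and adjacency of $\mathbf{u}$ and $\mathbf{w}$ in $P_i$ is equivalent to the existence of a two-dimensional face of $N$ containing both of these rays.

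The next step is to describe the faces of $N$ in terms of zero sets. I would prove that for any $\mathbf{p} \in N$, the smallest face of $N$ containing $\mathbf{p}$ is
\[ F(\mathbf{p}) = \{\mathbf{x} \in N \,:\, x_k = 0 \text{ for all } k \in Z(\mathbf{p})\}, \]
that $\mathbf{p}$ lies in its relative interior, and that the linear hull of $F(\mathbf{p})$ is
\[ L(\mathbf{p}) = H_1 \cap \ldots \cap H_i \cap \{\mathbf{x} \in \R^d \,:\, x_k = 0 \text{ for all } k \in Z(\mathbf{p})\}. \]
The key point is that at a relative interior point of $F(\mathbf{p})$ the orthant inequalities $x_k \geq 0$ for $k \notin Z(\mathbf{p})$ are strictly satisfied, so locally they impose no constraint; hence $\dim F(\mathbf{p}) = \dim L(\mathbf{p})$.

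For the forward implication, assume $\mathbf{u}$ and $\mathbf{w}$ are adjacent in $P_i$ and take $\mathbf{p} = \tfrac{1}{2}\mathbf{u} + \tfrac{1}{2}\mathbf{w}$, an interior point of the edge. Non-negativity of coordinates gives $p_k = 0 \iff u_k = w_k = 0$, so $Z(\mathbf{p}) = Z(\mathbf{u}) \cap Z(\mathbf{w})$, and $L(\mathbf{p})$ is exactly the subspace named in the lemma. The edge of $P_i$ lifts to the two-dimensional face $F(\mathbf{p})$ of $N$, so $\dim L(\mathbf{p}) = 2$.

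For the reverse implication, suppose $L := H_1 \cap \ldots \cap H_i \cap \{\mathbf{x} \,:\, x_k = 0 \text{ for } k \in Z(\mathbf{u}) \cap Z(\mathbf{w})\}$ has dimension two. Then $F := L \cap O$ is a face of $N$ containing the two linearly independent rays $\R_{\geq 0}\mathbf{u}$ and $\R_{\geq 0}\mathbf{w}$, so $\dim F \geq 2$; combined with $F \subseteq L$ this forces $\dim F = 2$. Slicing $F$ by $J$ produces a one-dimensional face of $P_i$ with endpoints $\mathbf{u}$ and $\mathbf{w}$, which is precisely the statement that these two vertices are adjacent. The main obstacle I expect is the careful justification that $\dim F(\mathbf{p}) = \dim L(\mathbf{p})$, i.e.\ that the non-binding orthant inequalities never secretly enforce additional linear equalities inside the face; this is standard for polyhedral cones but must be invoked cleanly so that the cone-to-polytope translation goes through in both directions.
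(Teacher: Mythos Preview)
The paper does not actually prove Lemma~\ref{l-algadj}; it simply states the result as a translation of \cite[Proposition~7]{fukuda96-doubledesc} into the language of the projective solution space, so there is no in-paper argument to compare against.

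Your proposed proof is correct and is essentially the standard argument one would give. Passing to the cone $N$ and identifying the minimal face through $\mathbf{p}=\tfrac12(\mathbf{u}+\mathbf{w})$ via its active inequalities is exactly the right move, and your observation $Z(\mathbf{p})=Z(\mathbf{u})\cap Z(\mathbf{w})$ (from non-negativity) is the key combinatorial fact. The point you flag---that $\dim F(\mathbf{p})=\dim L(\mathbf{p})$ because the non-active inequalities $x_k\geq 0$ for $k\notin Z(\mathbf{p})$ are strict at $\mathbf{p}$ and hence impose no linear equality on the face---is the standard ``affine hull of a face equals the intersection of its active constraints'' fact for polyhedra, and invoking it here is entirely appropriate. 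In the reverse direction, your identification $L\cap O = N\cap\{x_k=0:k\in Z(\mathbf{u})\cap Z(\mathbf{w})\}$ shows cleanly that $F$ is a face of $N$, and the transversality of $J$ to any nonzero face of $N\subseteq O$ (since $\sum x_i>0$ on $O\setminus\{0\}$) guarantees the dimension drop when you slice. Nothing is missing.
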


\begin{lemma}[Combinatorial Adjacency] \label{l-combadj}
    Consider some polytope $P_i$ with vertices
    $\mathbf{u},\mathbf{w}$ in Algorithm~\ref{a-dd}.
    Then $\mathbf{u}$ and $\mathbf{w}$ are adjacent in $P_i$ if and only
    if there is no other vertex $\mathbf{z}$ of $P_i$ for which
    $Z(\mathbf{z}) \supseteq Z(\mathbf{u}) \cap Z(\mathbf{w})$.
\end{lemma}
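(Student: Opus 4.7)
The plan is to characterize faces of $P_i$ in terms of zero sets, pinpoint the smallest face containing $\mathbf{u}$ and $\mathbf{w}$, and translate the condition ``this face is an edge'' into the combinatorial condition on zero sets.

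First I would observe that $P_i$ is a bounded convex polytope (it sits inside the unit simplex $O \cap J$) whose only inequality constraints are the orthant conditions $x_k \geq 0$; the projective constraint and the matching constraints are all equalities, and they merely restrict the ambient affine subspace $A = J \cap H_1 \cap \ldots \cap H_i$. Consequently, every nonempty face of $P_i$ arises as $F_S = \{ \mathbf{x} \in P_i \mid x_k = 0 \text{ for all } k \in S \}$ for some subset $S \subseteq \{1,\ldots,d\}$; equivalently, $F_S = \{ \mathbf{x} \in P_i \mid Z(\mathbf{x}) \supseteq S \}$. In particular, the smallest face of $P_i$ containing a point $\mathbf{x}$ is $F_{Z(\mathbf{x})}$, cut out by exactly those coordinate hyperplanes that pass through $\mathbf{x}$.

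Next, setting $K = Z(\mathbf{u}) \cap Z(\mathbf{w})$, I would argue that $F_K$ is the minimal face of $P_i$ containing both $\mathbf{u}$ and $\mathbf{w}$: any face containing both must lie in every hyperplane $x_k = 0$ with $k \in K$, while $F_K$ itself is such a face and contains both points. Its vertices are precisely the vertices of $P_i$ lying in $F_K$, namely the vertices $\mathbf{z}$ with $Z(\mathbf{z}) \supseteq K$. Now $\mathbf{u}$ and $\mathbf{w}$ are adjacent in $P_i$ exactly when $F_K$ is one-dimensional, i.e.\ the line segment $\overline{\mathbf{u}\mathbf{w}}$. Since $F_K$ is a bounded convex polytope and hence the convex hull of its (finitely many) vertices, $F_K$ is one-dimensional iff its vertex set is exactly $\{\mathbf{u},\mathbf{w}\}$ — which is the same as saying no further vertex $\mathbf{z}$ of $P_i$ satisfies $Z(\mathbf{z}) \supseteq K$.

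The main obstacle I anticipate is justifying the description of the face lattice in the first paragraph cleanly. One must be careful that although not every coordinate hyperplane $x_k=0$ need define a facet of $P_i$ (some may intersect $P_i$ in a lower-dimensional face, or in $P_i$ itself if $x_k$ is forced to vanish by the equalities), the sets $F_S$ nevertheless exhaust the face lattice, precisely because all inequality constraints of $P_i$ come from the orthant. Once this combinatorial picture is in place, the equivalence is just the observation that an edge of a polytope is a face with exactly two vertices.
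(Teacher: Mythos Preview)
The paper does not prove this lemma itself; it simply cites Proposition~7 of Fukuda and Prodon \cite{fukuda96-doubledesc} and translates the statement into the present setting. Your argument is correct and is essentially the standard face-lattice proof one would expect behind that citation: identify the minimal face of $P_i$ containing $\mathbf{u}$ and $\mathbf{w}$ as $F_K$ with $K = Z(\mathbf{u}) \cap Z(\mathbf{w})$, observe that adjacency means $\dim F_K = 1$, and use that a bounded polytope is a segment exactly when it has two vertices. The subtlety you flag---that not every hyperplane $x_k=0$ need be a facet---is handled correctly, since all you require is that every face of $P_i$ has the form $F_S$ for some $S$, which follows because the orthant inequalities are the only inequality constraints present.
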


Neither condition is fast to test; the algebraic condition requires the
rank of a matrix, and the combinatorial condition requires yet another
loop through the vertices in $V_i$.  The algebraic test is appealing,
since it is not sensitive to the size of $V_i$ (which can grow very
large).  On the other hand, Fukuda and Prodon report better results with
the combinatorial test, and argue that it should terminate early much
of the time \cite{fukuda96-doubledesc}.  With regards to existing software,
{\fxrays} and {\regina} use the algebraic and combinatorial tests
respectively.

We finish with a final implementation note.  Although we define
the problem in terms of vertices of the projective solution space,
it is easier to work with extremal rays of the polyhedral cone
$N = O \cap H_1 \cap \ldots \cap H_g$, as defined in Definition~\ref{d-proj}.
Abandoning the projective hyperplane allows us to use integer arithmetic
instead of rational arithmetic, which is both faster and easier to implement.
Both {\fxrays} and {\regina} exploit this technique.

\subsection{Filtering for Embedded Surfaces} \label{s-dd-filter}

One critical problem with polytope vertex enumeration is that the number
of vertices can grow extremely large (this is quantified more precisely
in Section~\ref{s-dd-complexity}).  It is therefore in our interests to
avoid generating ``uninteresting'' vertices if at all possible.  The
constraints of normal surface theory allow us to do just this, yielding
spectacular improvements in running time.

Recall from Section~\ref{s-normal} that we are only interested in
vertices that represent {\em embedded} normal surfaces, and that every
such vertex must satisfy the {\em quadrilateral constraints}
(Definition~\ref{d-admissible}).
Each of these constraints identifies three coordinates
$x_i,x_j,x_k$ (representing the three quadrilateral types in some
tetrahedron), and insists that at most one of these coordinates is non-zero.

A na\"ive implementation might generate all vertices of the projective
solution space and then discard those that do not satisfy the quadrilateral
constraints.  However, this does not make vertex enumeration any
faster.  Here we describe a filtering technique that discards such vertices
{\em at every intermediate stage} of the double description method,
thereby reducing the size of each set $V_i$ and speeding up the
subsequent stages of the algorithm.

This filtering method is due to David Letscher, and was used in his
proof-of-concept program {\em Normal} in 1999.  It does not appear in
the current literature, and so we describe it in detail here.

\begin{defn}[Compatibility] \label{d-compatibility}
    Two vectors $\mathbf{u},\mathbf{w} \in \R^d$ are said
    to be {\em compatible} if their sum $\mathbf{u}+\mathbf{w}$ satisfies
    the quadrilateral constraints.
\end{defn}

It is useful to characterize compatibility and the quadrilateral
constraints in terms of zero sets.  The following results are both
immediate consequences of Definitions~\ref{d-admissible}
and~\ref{d-zeroset}:

\begin{lemma} \label{l-zerosetquad}
    A vector $\mathbf{v} \in \R^d$ satisfies the quadrilateral
    constraints if and only if, for each tetrahedron of the underlying
    3-manifold triangulation, $Z(\mathbf{v})$ is missing at most one
    quadrilateral coordinate for that tetrahedron.
\end{lemma}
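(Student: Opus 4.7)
The plan is to prove the lemma by directly unwinding the definitions of the two conditions and showing they are logically equivalent tetrahedron-by-tetrahedron. Since both the quadrilateral constraints and the zero set are defined coordinate-wise, the equivalence should reduce to a simple counting statement about how many of three specified coordinates vanish.

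First I would fix an arbitrary tetrahedron $\Delta_i$ of the triangulation, and let $q_{i,1}, q_{i,2}, q_{i,3}$ denote the three quadrilateral coordinates of $\mathbf{v}$ associated with $\Delta_i$ (with indices $\alpha,\beta,\gamma$ say inside the ambient $\R^d$). By Definition~\ref{d-admissible}, $\mathbf{v}$ satisfies the quadrilateral constraint at $\Delta_i$ iff at most one of $q_{i,1},q_{i,2},q_{i,3}$ is non-zero, equivalently iff at least two of them are zero. By Definition~\ref{d-zeroset}, each $q_{i,k}=0$ corresponds exactly to the index of that coordinate lying in $Z(\mathbf{v})$. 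Thus "at least two of the three quadrilateral coordinates are zero" translates directly into "at least two of the three indices $\alpha,\beta,\gamma$ lie in $Z(\mathbf{v})$", which is in turn the statement that $Z(\mathbf{v})$ is missing at most one quadrilateral index for $\Delta_i$.

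Next I would observe that the quadrilateral constraints of Definition~\ref{d-admissible} are imposed independently on each tetrahedron, so $\mathbf{v}$ satisfies them iff it satisfies the constraint at every $\Delta_i$. Combined with the per-tetrahedron equivalence above, this yields the lemma in both directions.

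The "main obstacle" here is really just bookkeeping: there is no substantive mathematical content beyond translating between the two equivalent ways of saying that two of three numbers vanish. I would keep the proof to a few lines, emphasizing that the result is an immediate consequence of Definitions~\ref{d-admissible} and~\ref{d-zeroset}, which is exactly how the lemma is advertised in the text.
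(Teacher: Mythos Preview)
Your proposal is correct and matches the paper's treatment: the paper does not give a separate proof but simply states that the lemma is an immediate consequence of Definitions~\ref{d-admissible} and~\ref{d-zeroset}, which is exactly the unwinding you describe. Your observation that the content is pure bookkeeping is accurate, and a one- or two-line proof (or even just the remark that it follows from the definitions) is all that is needed.
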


\begin{lemma} \label{l-zerosetcompat}
    If vectors $\mathbf{u},\mathbf{w} \in \R^d$ contain only non-negative
    elements, then for any $\alpha, \beta > 0$ we have
    $Z(\alpha\mathbf{u} + \beta\mathbf{w}) = Z(\mathbf{u}) \cap Z(\mathbf{w})$.
    In particular, $\mathbf{u}$ and $\mathbf{w}$ are compatible if and
    only if, for each tetrahedron of the underlying 3-manifold
    triangulation, $Z(\mathbf{u}) \cap Z(\mathbf{w})$ is missing at most one
    quadrilateral coordinate for that tetrahedron.
\end{lemma}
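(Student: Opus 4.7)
The plan is to prove the two assertions separately, with the second one reducing almost immediately to the first once Lemma~\ref{l-zerosetquad} is invoked. Both statements are really coordinate-wise arithmetic observations, so the main ``obstacle'' is just being careful about what the zero set notation is saying; there is no genuinely hard step.

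For the first assertion, I would argue coordinate by coordinate. Fix an index $k$ and consider the $k$th entry of $\alpha\mathbf{u} + \beta\mathbf{w}$, namely $\alpha u_k + \beta w_k$. Since $\alpha, \beta > 0$ and $u_k, w_k \geq 0$ by hypothesis, each of the two summands $\alpha u_k$ and $\beta w_k$ is non-negative. Their sum vanishes if and only if both summands vanish, which (again using $\alpha, \beta > 0$) happens if and only if $u_k = 0$ and $w_k = 0$. Thus $k \in Z(\alpha\mathbf{u} + \beta\mathbf{w})$ iff $k \in Z(\mathbf{u}) \cap Z(\mathbf{w})$, giving the claimed equality of zero sets. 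This is the only place where non-negativity is actually used; without it, cancellation could create ``accidental'' zeros.

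For the second assertion, I would apply the first part with $\alpha = \beta = 1$ to conclude that $Z(\mathbf{u}+\mathbf{w}) = Z(\mathbf{u}) \cap Z(\mathbf{w})$. By Definition~\ref{d-compatibility}, $\mathbf{u}$ and $\mathbf{w}$ are compatible precisely when $\mathbf{u}+\mathbf{w}$ satisfies the quadrilateral constraints, and by Lemma~\ref{l-zerosetquad} this happens iff $Z(\mathbf{u}+\mathbf{w})$ is missing at most one quadrilateral coordinate per tetrahedron. Substituting the identity $Z(\mathbf{u}+\mathbf{w}) = Z(\mathbf{u}) \cap Z(\mathbf{w})$ yields the statement of the lemma. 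The only subtlety worth flagging is the hypothesis that $\mathbf{u}$ and $\mathbf{w}$ have non-negative entries; since vertices (and extremal rays) of the projective solution space lie in the non-negative orthant $O$, this hypothesis is automatically satisfied in every application of the lemma within the double description algorithm, so no additional justification is needed at the point of use.
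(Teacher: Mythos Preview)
Your argument is correct and matches the paper's treatment: the paper simply states that this lemma is an immediate consequence of Definitions~\ref{d-admissible} and~\ref{d-zeroset}, and your coordinate-by-coordinate reasoning together with the appeal to Definition~\ref{d-compatibility} and Lemma~\ref{l-zerosetquad} is exactly what underlies that claim.
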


It should be observed that all intermediate vertices obtained throughout the
double description method are non-negative, since each intermediate polytope
$P_i$ lies inside the non-negative orthant.
Therefore Lemma~\ref{l-zerosetcompat} can be used in practice
as a fast compatibility test.  We return to this
implementation detail in Section~\ref{s-opt}; in the meantime we proceed
with the main filtering algorithm.

\begin{algorithm}[Vertex Filtering] \label{a-filter}
    Consider the double description method as outlined in Algorithm~\ref{a-dd}.
    Suppose we alter step~2(c), so that a pair $\mathbf{u} \in S_+$,
    $\mathbf{w} \in S_-$ is considered only if vectors
    $\mathbf{u}$ and $\mathbf{w}$ are compatible.

    Then, in the resulting algorithm, each intermediate set $V_i$ will
    contain only those vertices of polytope $P_i$ that satisfy the
    quadrilateral constraints.  In particular, the final set $V_g$ will
    contain only those vertices of the projective solution space
    $P = P_g$ that satisfy the quadrilateral constraints.
\end{algorithm}

This algorithm is mostly easy to implement, though there is one
important difficulty.  Step~2(c) of the double description method
requires us to determine whether two vectors are adjacent in the polytope
$P_{i-1}$; however, since we have filtered out some vertices,
we no longer have access to a complete description of $P_{i-1}$.
Happily this turns out not to be a problem---we return to this adjacency
issue after proving the main algorithm correct.

\begin{proof}
    Algorithm~\ref{a-filter} is simple to prove by induction.
    To avoid confusion, let $V_i^D$ denote the
    vertex sets obtained using the original double description method,
    and let $V_i^F$ denote the new sets obtained using vertex
    filtering.  Our claim is that $V_i^F$ contains precisely those
    vectors $\mathbf{v} \in V_i^D$ that satisfy the quadrilateral constraints.

    To begin, we can observe that $V_i^F \subseteq V_i^D$ for each $i$,
    since filtering cannot create new vertices that were not there originally.
    It suffices therefore
    to consider the fate of each original vertex $\mathbf{v} \in V_i^D$.
    We proceed now with the main induction.

    Our claim is certainly true for $V_0^F = V_0^D$, since these
    initial sets contain only unit vectors.  Suppose the claim is true at
    stage $i-1$, and consider some original vertex
    $\mathbf{v} \in V_i^D$.  There are two possible ways in which the original
    double description method could insert $\mathbf{v}$ into $V_i^D$:
    \begin{enumerate}[(i)]
        \item Vector $\mathbf{v}$ is inserted during step~2(b).
        That is, $\mathbf{v}$ comes from the previous vertex set
        $V_{i-1}^D$ and is found to belong to the matching hyperplane $H_i$.

        Since vertex filtering does not affect step~2(b) of the double
        description method, the filtering algorithm inserts $\mathbf{v}$
        into $V_i^F$ if and only if $\mathbf{v}$ is found in $V_{i-1}^F$.
        By our inductive hypothesis, this is true if and only if $\mathbf{v}$
        satisfies the quadrilateral constraints.

        \item Vector $\mathbf{v}$ is inserted during step~2(c).
        That is, $\mathbf{v}$ does not belong to the previous set
        $V_{i-1}^D$, but is instead created as the intersection
        $\overline{\mathbf{u} \mathbf{w}} \cap H_i$, where
        $\mathbf{u},\mathbf{w} \in V_{i-1}^D$ lie on opposite sides of
        the hyperplane $H_i$.

        We begin by noting that
        $\mathbf{v} = \alpha\mathbf{u} + \beta\mathbf{w}$ for some
        $\alpha,\beta > 0$.  There are two cases to consider:
        \begin{itemize}
            \item If either $\mathbf{u}$ or $\mathbf{w}$ does not satisfy
            the quadrilateral constraints, then the combination
            $\mathbf{v}=\alpha\mathbf{u}+\beta\mathbf{w}$ cannot satisfy
            the quadrilateral constraints.  By our inductive
            hypothesis, the pair $\mathbf{u},\mathbf{w}$ is not found in
            $V_{i-1}^F$ and so $\mathbf{v}$ is (correctly) not added to the
            new set $V_i^F$.
            \item If both $\mathbf{u}$ and $\mathbf{w}$ satisfy the
            quadrilateral constraints then, by Lemmas~\ref{l-zerosetquad}
            and~\ref{l-zerosetcompat}, the new vertex $\mathbf{v}$
            satisfies the quadrilateral constraints if and only if
            $\mathbf{u}$ and $\mathbf{w}$ are compatible.  Here the
            filtering algorithm also acts correctly---the inductive
            hypothesis ensures that both $\mathbf{u},\mathbf{w} \in V_{i-1}^F$,
            and so the filtering algorithm adds $\mathbf{v}$ to $V_i^F$
            if and only if $\mathbf{u}$ and $\mathbf{w}$ are compatible.
        \end{itemize}
    \end{enumerate}
    In each case we find that $\mathbf{v}$ is inserted into $V_i^F$ if
    and only if it satisfies the quadrilateral constraints, and so the
    induction is complete.
\end{proof}

We finish with a discussion of the different adjacency tests.
The algebraic adjacency test (Lemma~\ref{l-algadj}) does not rely on the
vertex set $V_i$, and so we can use it unchanged with the filtering
algorithm.

The combinatorial test (Lemma~\ref{l-combadj}) is more of a
problem, since it requires us to examine every vertex of the
intermediate polytope $P_i$.  This is impossible with the filtering
method, where we deliberately throw away uninteresting vertices of $P_i$
to make the algorithm faster.  Happily this does not matter,
as seen in the following result.

\begin{lemma}[Filtered Combinatorial Adjacency] \label{l-filtercombadj}
    Consider some intermediate polytope $P_i$ in the vertex filtering
    algorithm, and let $V_i$ contain those vertices of $P_i$ that
    satisfy the quadrilateral constraints.
    If vertices $\mathbf{u},\mathbf{w} \in V_i$ are compatible, then
    they are adjacent in the polytope $P_i$
    if and only if there is no other $\mathbf{z} \in V_i$ for which
    $Z(\mathbf{z}) \supseteq Z(\mathbf{u}) \cap Z(\mathbf{w})$.
\end{lemma}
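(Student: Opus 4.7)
The plan is to reduce directly to the original combinatorial adjacency test (Lemma~\ref{l-combadj}), which applies to the \emph{full} vertex set of $P_i$, by showing that when $\mathbf{u}$ and $\mathbf{w}$ are compatible, the filtered set $V_i$ contains every vertex of $P_i$ that could possibly serve as an obstruction to adjacency. One direction is immediate: if some $\mathbf{z} \in V_i$ (other than $\mathbf{u},\mathbf{w}$) satisfies $Z(\mathbf{z}) \supseteq Z(\mathbf{u}) \cap Z(\mathbf{w})$, then $\mathbf{z}$ is in particular a vertex of $P_i$, and Lemma~\ref{l-combadj} forces $\mathbf{u},\mathbf{w}$ to be non-adjacent. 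So the real content is the converse.

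For the converse, I would argue the contrapositive. Suppose $\mathbf{u}$ and $\mathbf{w}$ are not adjacent in $P_i$. By Lemma~\ref{l-combadj}, there exists some other vertex $\mathbf{z}$ of $P_i$ (not necessarily in $V_i$) with $Z(\mathbf{z}) \supseteq Z(\mathbf{u}) \cap Z(\mathbf{w})$. The task is to show that this $\mathbf{z}$ must in fact already lie in $V_i$, i.e.\ that $\mathbf{z}$ satisfies the quadrilateral constraints.

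This is the crux, and it follows from a short zero-set chase. Since $\mathbf{u}$ and $\mathbf{w}$ are compatible, Lemma~\ref{l-zerosetcompat} tells us that for each tetrahedron of $\mathcal{T}$ the set $Z(\mathbf{u}) \cap Z(\mathbf{w})$ is missing at most one quadrilateral coordinate. The inclusion $Z(\mathbf{z}) \supseteq Z(\mathbf{u}) \cap Z(\mathbf{w})$ only shrinks the collection of missing coordinates, so $Z(\mathbf{z})$ also misses at most one quadrilateral coordinate per tetrahedron. By Lemma~\ref{l-zerosetquad}, this means $\mathbf{z}$ satisfies the quadrilateral constraints, and hence $\mathbf{z} \in V_i$, as required.

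I do not anticipate any serious obstacles; the entire argument is a monotonicity observation on zero sets applied on top of Lemma~\ref{l-combadj}. The only subtlety worth flagging explicitly in the write-up is the direction of the ``missing at most one'' condition under superset: a priori one might worry that taking a superset could \emph{introduce} violations, but since the quadrilateral constraints forbid non-zero entries rather than zero entries, a larger zero set is in fact a weaker condition and is automatically preserved. Making that remark explicit should suffice to complete the proof.
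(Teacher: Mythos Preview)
Your proposal is correct and follows essentially the same argument as the paper: both directions reduce to Lemma~\ref{l-combadj}, and for the non-adjacent case you use compatibility via Lemma~\ref{l-zerosetcompat} together with the superset inclusion to force the obstructing vertex $\mathbf{z}$ to satisfy the quadrilateral constraints (Lemma~\ref{l-zerosetquad}), hence to lie in $V_i$. The only cosmetic difference is that the paper states the forward direction directly rather than as a contrapositive.
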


\begin{proof}
Suppose $\mathbf{u}$ and $\mathbf{w}$ are adjacent in $P_i$.  By
Lemma~\ref{l-combadj} there is no vertex $\mathbf{z}$ of $P_i$
for which $Z(\mathbf{z}) \supseteq Z(\mathbf{u}) \cap Z(\mathbf{w})$,
and in particular there is no such $\mathbf{z} \in V_i$.

Alternatively, suppose $\mathbf{u}$ and $\mathbf{w}$ are not adjacent.
By Lemma~\ref{l-combadj} there is some other vertex $\mathbf{z}$ of $P_i$
for which $Z(\mathbf{z}) \supseteq Z(\mathbf{u}) \cap Z(\mathbf{w})$.
Because $\mathbf{u}$ and $\mathbf{w}$ are compatible,
each tetrahedron has at most one quadrilateral coordinate missing from
the set $Z(\mathbf{u}) \cap Z(\mathbf{w})$ (Lemma~\ref{l-zerosetcompat}).
The same thing can therefore be said about the superset $Z(\mathbf{z})$,
and so $\mathbf{z}$ satisfies the quadrilateral constraints
(Lemma~\ref{l-zerosetquad}).  Thus $\mathbf{z} \in V_i$, and the proof
is complete.
\end{proof}

Vertex filtering is essential for any serious implementation of normal
surface enumeration (and is used by all of the implementations discussed
earlier).  The only cost is the new compatibility test in
step~2(c) of the double description method.  On the other hand, vertex
filtering can dramatically reduce the size of the vertex sets $V_i$,
which cuts down both running time
and memory usage as the double description method loops through these
vertex sets.

As a final note, vertex filtering can also be applied to the sister problem
of {\em almost normal surface enumeration}, where we introduce new
``disc'' types such as octagons and tubes.
Embedded almost normal surfaces have additional
rules, such as at most one quadrilateral {\em or} octagonal disc type
per tetrahedron, and at most one octagon or tube in the entire surface.
These rules yield new constraints of the form ``at most one of the following
coordinates may be non-zero'', whereupon similar filtering methods can be
applied.  The reader is referred to \cite{jaco03-0-efficiency} for further
discussion of almost normal surfaces.

\subsection{Worst Cases and Pivoting Algorithms} \label{s-dd-complexity}

Although the double description method is simple and elegant, it is
unfortunately also very slow and very memory-hungry.  We finish this
overview with a discussion of just how bad things can get.

In many ways the double description method is not at fault---the difficulties
are rooted in the problem it aims to solve.  Dyer shows that counting the
vertices of an arbitrary polytope is NP-hard \cite{dyer83-complexity}, and
Khachiyan et al.~show that vertex enumeration over a polyhedron is
NP-hard \cite{khachiyan08-hard}; these results do not bode well.

At the very least, the running time is bounded below by the number of
vertices (i.e., the size of the output), which for bounded polytopes
can grow exponentially large in the polytope dimension.  Specifically,
for a polytope of dimension $p$ with $f$ facets, the upper bound theorem
of McMullen \cite{mcmullen70-ubt} shows the worst case to be
\begin{eqnarray*}
\mbox{number of vertices} &\leq&
    {f - \lceil \frac12 p \rceil \choose \lfloor \frac12 p \rfloor} +
    {f - \lfloor \frac12 p \rfloor - 1 \choose \lceil \frac12 p \rceil - 1}.
\end{eqnarray*}

As an exercise, we can estimate this upper bound in the context
of normal surface enumeration.  Consider a one-vertex triangulation of
a closed 3-manifold containing $n$ tetrahedra.
Extending a result of Kang and Rubinstein \cite{kang04-taut1},
Tillmann \cite{tillmann08-finite}
shows that the matching equations have a solution space of dimension $2n+1$.
Taking the intersection with the projective hyperplane and the
non-negative orthant in $\R^d$, it follows that the projective solution
space is at worst a $2n$-dimensional polytope with $d$ facets.
In the standard framework of Section~\ref{s-normal} where $d=7n$,
McMullen's upper bound becomes $\frac76 {6n \choose n}$; in
Tollefson's quadrilateral space where $d=3n$ this bound becomes
$\frac32 {2n \choose n}$.  Using Stirling's approximation,
these bounds simplify to:
\begin{eqnarray*}
    \mbox{number of vertices in standard space} & \lesssim &
        \frac{7}{2 \sqrt{15 \pi n}} \cdot \left( \frac{6^6}{5^5} \right)^n
        \ \simeq\ \frac{7}{2 \sqrt{15 \pi n}} \cdot 15^n \\
    \mbox{number of vertices in quadrilateral space} & \lesssim &
        \frac{3}{2 \sqrt{\pi n}} \cdot 4^n \\
\end{eqnarray*}
Thus even in quadrilateral space, the number of vertices can potentially grow
as $4^n$.

One critical weakness of the double description method is its memory
consumption---since it involves looping through vertices of the
intermediate polytopes $P_i$, its memory usage is linear in the number
of such vertices (we return to this in detail in Section~\ref{s-opt-product}).
Pivoting methods for polytope vertex enumeration are superior in this
respect.  In particular, Avis and Fukuda describe a pivoting algorithm
that requires virtually no additional memory beyond storage of the input
data \cite{avis92-pivot}; this algorithm is further refined by
Avis \cite{avis00-revised}.

Although pivoting methods are appealing for the general vertex
enumeration problem, they make it difficult to exploit the quadrilateral
constraints.  Pivoting methods essentially map out the vertices of a
polytope by tracing out the simplex algorithm in reverse, moving from
vertex to vertex using different styles of pivot.  The difficulty is
in finding a pivot that can ``avoid'' uninteresting vertices but still map out
the remainder of the polytope.  Indeed, there is no guarantee that the
region of the polytope that satisfies the quadrilateral constraints is
even connected, and in quadrilateral space it is easy to build examples
where this is not the case.

Since even the fastest pivoting method remains bounded by the size of the
output, it is essential that the quadrilateral constraints can be woven
directly into the algorithm---one cannot afford to construct $4^n$
vertices in the worst case if the quadrilateral constraints are able to
make this number orders of magnitude smaller.  For this
reason we focus on the double description method with vertex filtering,
and leave pivoting methods for future work.

\section{Optimizations} \label{s-opt}

The algorithms of Section~\ref{s-dd} describe a ``vanilla'' implementation
of normal surface enumeration, as implemented for instance by older
versions of the software package {\regina}.
In this section we describe a series of optimizations that, as observed
experimentally, yield substantial improvements in both running time and
memory consumption.  The relevant experiments and their results are
summarized in Section~\ref{s-expt}.

The improvements presented here are offered as a guide for researchers
seeking to code up their own implementations.  Section~\ref{s-opt-micro}
begins with a discussion of well-known but important implementation
techniques, including bitmasks and cache optimization.
Section~\ref{s-opt-sort} focuses on ordering the matching hyperplanes
in a way that exploits the structure of the
matching equations and quadrilateral constraints.  In Section~\ref{s-opt-dim}
we extend a technique of Fukuda and Prodon \cite{fukuda96-doubledesc}
that combines features of
both the algebraic and combinatorial adjacency tests.  Finally,
Section~\ref{s-opt-product} presents a technique in which we
store and manipulate only ``essential'' properties of the intermediate
vertices rather than the vertices themselves.

Memory consumption deserves a particular mention here.  As noted in
Section~\ref{s-dd-complexity}, both the running time and memory usage
for the double description method are exponential in the worst case.  Whilst
running time is in theory an unbounded resource (as long as you are patient
enough), memory is not---a typical personal computer has only a couple of
gigabytes of fast memory.  Once this is exhausted (which has happened to
the author many times during normal surface enumeration), the computer
borrows additional ``virtual memory'' from the hard drive.  This virtual
memory is much, much slower, and can have a severe impact not only on
the performance of the algorithm but also on the entire operating system.

It is prudent therefore to give memory consumption just as high a priority
as running time when working on the double description
method.  We address memory indirectly in Section~\ref{s-opt-sort},
and focus on it explicitly with the techniques of
Section~\ref{s-opt-product}.

\subsection{Implementation Techniques} \label{s-opt-micro}

We begin our catalogue of optimizations with some simple implementation tricks.
Though they are well-known, we include them here for reference
because they have been found to improve the running time significantly.

\begin{itemize}
    \item {\em Bitmasks:} \\
    Several components of the double description method work with zero
    sets of vectors.  These include:
    \begin{itemize}
        \item the combinatorial adjacency test (Lemmas~\ref{l-combadj}
        and~\ref{l-filtercombadj}),
        where we only consider a pair $\mathbf{u},\mathbf{w} \in V_i$
        if there is no other $\mathbf{z} \in V_i$ for which
        $Z(\mathbf{z}) \supseteq Z(\mathbf{u}) \cap Z(\mathbf{w})$;
        \item the compatibility test (Lemma~\ref{l-zerosetcompat}),
        where we only consider a pair $\mathbf{u},\mathbf{w} \in V_i$
        if the set $Z(\mathbf{u}) \cap Z(\mathbf{w})$ is missing at most
        one quadrilateral coordinate for each tetrahedron.
    \end{itemize}

    It is therefore convenient to store the zero set alongside each
    vector as we run through the double description algorithm.  This can
    be done with almost no memory overhead using {\em bitmasks}.  For
    instance, in dimension $d \leq 64$, an entire zero set $S$ can be
    stored using a single 64-bit integer (where the $i$th bit is set if
    and only if $i \in S$).  For $d \leq 128$ this can be done using
    two 64-bit integers, and so on.

    Bitmasks are advantageous because they make set operations extremely
    fast---by using bitwise arithmetic on integers, the computer can
    effectively work on all elements of a set in parallel.  For instance, if
    $d \leq 64$ then set intersection can be computing using the
    single {\ccpp} instruction \verb#ans = x & y#, and subset
    relationships can be tested using the single {\ccpp} test
    \verb#if (x & y == y)#.  Computing the size of a set (as required
    by the compatibility test) is a little more complex, but still fast;
    Warren \cite{warren02-hackers-delight} describes several clever methods
    that are far more efficient than looping through and testing each
    individual bit.\footnote{One can avoid this operation entirely by
    replacing each quadrilateral constraint with three ``illegal
    supersets'' of $Z(\mathbf{v})$.  However, this does not scale well
    to almost normal surfaces since the number of illegal supersets is
    quadratic in the size of each constraint.}

    Finally, bitmasks are not only cheap to store but also fast to
    construct.  As the double description algorithm progresses, new
    vectors $\mathbf{v} \in V_i$ are created from old vectors
    $\mathbf{u},\mathbf{w} \in V_{i-1}$ by forming intersections of the
    form $\overline{\mathbf{u}\mathbf{w}} \cap H_i$.
    Lemma~\ref{l-zerosetcompat} shows that
    $Z(\mathbf{v}) = Z(\mathbf{u}) \cap Z(\mathbf{w})$, which can be
    computed using the fast set operations described above.

    It should be noted that the highly streamlined {\fxrays} has used
    bitmasks for compatibility testing for many years (though it
    optimizes their application for some coordinate systems at the expense
    of others).

    \item {\em Cache optimization:} \\
    In his article on optimizing memory access \cite{drepper07-memory},
    Drepper offers programmers advice on how to best utilize the CPU caches.
    One simple rule is that data that is accessed sequentially should be
    stored sequentially; this allows the CPU to prefetch large chunks of data
    from memory and work with the (much faster) caches instead.  To
    illustrate, Drepper makes a na\"ive implementation of the matrix
    product $A \times B$ run over four times as fast simply by storing
    $A$ and $B$ in row major and column major order respectively---this
    works because the data storage follows the sequential order in which
    elements must be accessed to compute the term
    $(A \times B)_{i,j} = \sum_k A_{i,k} B_{k,j}$.

    In the double description method, where the vertex sets $V_i$ can
    grow extremely large, there is a temptation to use techniques that
    avoid large-scale allocations and deallocations of memory.  For instance,
    we might partition vertices into the sets $S_0$, $S_+$ and $S_-$ in-place,
    without allocating additional temporary memory for these sets.
    However, because Algorithm~\ref{a-dd} repeatedly iterates
    through these sets, Drepper's article suggests that we should
    allocate new blocks of memory to store these sets
    sequentially as simple arrays (or, in {\cpp}, contiguous
    \texttt{std::vector} types).  Likewise, we should avoid storing
    vertices in linked list structures---although vector data types
    require occasional large reallocations of memory, they maintain
    sequential data storage where linked lists do not.

    In theory the benefits of sequential
    data access should be well worth the cost of the extra memory
    allocation and deallocation, and the experimental evidence of
    Section~\ref{s-expt} agrees.
\end{itemize}

\subsection{Hyperplane Sorting} \label{s-opt-sort}

It is well known that the performance of the double description method
is highly sensitive to the order in which the hyperplanes are processed.
This is because the ordering of hyperplanes affects the size of each
intermediate vertex set $V_i$, which in turn directly affects both
running time and memory consumption---running time because step~2(c) of
the double description method involves two nested loops over subsets
$S_+,S_- \subseteq V_{i-1}$, and memory consumption because the entire
vertex set $V_i$ must be computed and stored at each stage, ready for
use in the subsequent iteration of the main loop.

Avis et al.~\cite{avis97-howgood-compgeom} present a series of heuristic
options for this ordering, and proceed to manufacture cases in which
each of them performs poorly; Fukuda and Prodon \cite{fukuda96-doubledesc}
also experiment with different heuristic orderings, and obtain best
results with a lexicographic ordering (in which hyperplanes are sorted
lexicographically according to their coefficient vectors).  However,
Avis et al.~highlight the fact that no one heuristic is ``universally
good'', and that any additional knowledge about the problem at hand
should be exploited if this is possible.

In the context of normal surface enumeration, we can exploit the
following facts:
\begin{enumerate}[(i)]
    \item \label{en-sort-sparse}
    Each hyperplane comes from a single matching equation
    of the form $\mathbf{m}^{(i)} \cdot \mathbf{x} = 0$.  These matching
    equations are {\em sparse}---we can see from Definition~\ref{d-admissible}
    that each coefficient vector $\mathbf{m}^{(i)}$ has at most four non-zero
    coordinates.
    \item \label{en-sort-filter}
    The vertex filtering method strips out any
    vertices with ``incompatible'' non-zero quadrilateral coordinates.
    If we can use the matching equations to relate different quadrilateral
    coordinates within the same tetrahedron (in particular, force them
    to be non-zero), we can thereby hope that many vertices will be
    filtered out (thus keeping the vertex sets $V_i$ as small as possible).
\end{enumerate}

We use observation~(\ref{en-sort-filter}) to define a new ordering of
hyperplanes.  Essentially we start with matching equations that only
involve the final few tetrahedra; gradually we incorporate more and more
tetrahedra into our equations until the entire triangulation is covered.
Since the matching equations are sparse, we expect this to be feasible.
The result is as follows:

\begin{algorithm}[Ordering of Matching Hyperplanes] \label{a-sort}
    Consider some hyperplane $H$ in $\R^d$, defined by the matching
    equation $\mathbf{m} \cdot \mathbf{x} = 0$.  We define the
    {\em position vector} $\mathbf{p}(H)$ to be a $(0,1)$-vector of length
    $d$, where the $k$th element of $\mathbf{p}(H)$ is 0 or 1 according
    to whether the $k$th element of $\mathbf{m}$ is zero or non-zero
    respectively.

    We now insert an extra step at the beginning of the double description
    method, which is to reorder the hyperplanes so that
    $\mathbf{p}(H_1) \leq \mathbf{p}(H_2) \leq \ldots \leq \mathbf{p}(H_g)$.
    Here we treat $\leq$ as a lexicographic ordering of position vectors
    (so in dimension $d=3$ for instance, we have
    $(0,1,0) < (0,1,1) < (1,0,0)$ and so on).
\end{algorithm}

\begin{table}[htb]
\centerline{\small \begin{tabular}{c|c|c|c}
Order & Matching equation &
    Coefficient vector $\mathbf{m}^{(i)}$ & Position vector $\mathbf{p}(H_i)$ \\
\hline
1 &         $q_{1,3}=q_{1,2}$         &
    $0,\mgap 0,\mgap 0,\mgap 0,\mgap 0,-1,\mgap 1$ & $0,~0,~0,~0,~0,~1,~1$ \\
2 & $t_{1,2}+q_{1,2}=t_{1,4}+q_{1,1}$ &
    $0,\mgap 1,\mgap 0,-1,-1,\mgap 1,\mgap 0$      & $0,~1,~0,~1,~1,~1,~0$ \\
3 & $t_{1,3}+q_{1,1}=t_{1,2}+q_{1,3}$ &
    $0,-1,\mgap 1,\mgap 0,\mgap 1,\mgap 0,-1$      & $0,~1,~1,~0,~1,~0,~1$ \\
4 & $t_{1,3}+q_{1,3}=t_{1,2}+q_{1,1}$ &
    $0,-1,\mgap 1,\mgap 0,-1,\mgap 0,\mgap 1$      & $0,~1,~1,~0,~1,~0,~1$ \\
5 & $t_{1,1}+q_{1,1}=t_{1,3}+q_{1,2}$ &
    $1,\mgap 0,-1,\mgap 0,\mgap 1,-1,\mgap 0$      & $1,~0,~1,~0,~1,~1,~0$
\end{tabular}}
\caption{Ordering the matching hyperplanes for the Gieseking manifold}
\label{tab-gieseking}
\end{table}

This ordering is illustrated in Table~\ref{tab-gieseking} for the
one-tetrahedron triangulation of the Gieseking manifold; it can be seen
that the position vectors (though not the coefficient vectors) are indeed
sorted lexicographically.  The original matching equations are also
included in the table, using the notation of Definition~\ref{d-admissible}.

In general, the reason we use position vectors is so that
equations involving only the final few tetrahedra will be processed
relatively early, since their position vectors will begin with long
strings of zeroes.  Likewise, equations that involve the
first coordinate of the first tetrahedron will be processed very late
because their position vectors will begin with a one.
We are therefore able to exploit
observation~(\ref{en-sort-filter}) as outlined above.

Like any other hyperplane ordering, Algorithm~\ref{a-sort} is merely a
heuristic.  However, experimentation shows that it performs very well.
This is seen in Section~\ref{s-expt-order}, where we compare it against
several standard heuristics from the literature.

\subsection{Filtering Pairs by Dimension} \label{s-opt-dim}

Recall from Section~\ref{s-dd} that we have two options for testing
whether vertices $\mathbf{u},\mathbf{w} \in V_i$ are adjacent in
the intermediate polytope $P_i$.  These are the algebraic adjacency test
(Lemma~\ref{l-algadj}), and the combinatorial adjacency test
(Lemmas~\ref{l-combadj} and~\ref{l-filtercombadj}).

Fukuda and Prodon compare these tests experimentally, and find in
their examples that the combinatorial test yields better results
\cite{fukuda96-doubledesc}.  However, recall that the combinatorial test
declares vertices $\mathbf{u},\mathbf{w} \in V_i$ adjacent
if and only if there is no other $\mathbf{z} \in V_i$ for which
$Z(\mathbf{z}) \supseteq Z(\mathbf{u}) \cap Z(\mathbf{w})$.  This means that,
in the worst case, the combinatorial test requires looping through the entire
(possibly very large) vertex set $V_i$ in search for such a $\mathbf{z}$.

Fortunately we can break out of this loop early when $\mathbf{u}$ and
$\mathbf{w}$ are non-adjacent (which is expected in the majority of
cases)---we simply exit the loop when such a $\mathbf{z}$ is found.
However, Fukuda and Prodon take this further and identify cases
in which there is no need to loop at all.  Their idea is to use properties
of the {\em algebraic} test that only rely upon {\em combinatorial} data.
Their result, translated into our formulation of the double description
method, is as follows:

\begin{lemma}[Dimensional Filtering] \label{l-dimfilter}
    Consider some intermediate polytope $P_i \subseteq \R^d$ in the double
    description method (Algorithm~\ref{a-dd}), formed as the intersection
    $P_i = O \cap J \cap H_1 \cap H_2 \cap \ldots \cap H_i$.
    If $\mathbf{u}$ and $\mathbf{w}$ are adjacent vertices of $P_i$, then
    \begin{equation} \label{eqn-dimfilter}
    |Z(\mathbf{u}) \cap Z(\mathbf{w})| + i \geq d - 2.
    \end{equation}
\end{lemma}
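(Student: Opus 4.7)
The plan is to derive the inequality directly from the algebraic adjacency test (Lemma~\ref{l-algadj}) together with the elementary fact that intersecting $N$ hyperplanes in $\R^d$ produces an affine subspace of dimension at least $d - N$.

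First I would invoke Lemma~\ref{l-algadj}: since $\mathbf{u}$ and $\mathbf{w}$ are adjacent in $P_i$, the affine subspace
\[ L \;=\; H_1 \cap \ldots \cap H_i \;\cap\; \bigcap_{k \in Z(\mathbf{u}) \cap Z(\mathbf{w})} \{\mathbf{x} \in \R^d \mid x_k = 0\} \]
has dimension exactly $2$.

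Next I would count the hyperplanes defining $L$. There are $i$ matching hyperplanes $H_1,\ldots,H_i$, together with one coordinate hyperplane $x_k=0$ for each $k \in Z(\mathbf{u}) \cap Z(\mathbf{w})$, giving a total of $i + |Z(\mathbf{u}) \cap Z(\mathbf{w})|$ hyperplanes in $\R^d$. Since each hyperplane can drop the dimension of an affine intersection by at most one, standard linear algebra gives
\[ \dim L \;\geq\; d - \bigl( i + |Z(\mathbf{u}) \cap Z(\mathbf{w})| \bigr). \]

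Combining these two facts yields $2 \geq d - i - |Z(\mathbf{u}) \cap Z(\mathbf{w})|$, which rearranges to the desired inequality $|Z(\mathbf{u}) \cap Z(\mathbf{w})| + i \geq d - 2$. There is no real obstacle in this argument; the only subtlety worth noting is that we use the ``at most'' direction of the codimension bound (which holds unconditionally for any family of hyperplanes), so we do not need to worry about whether the matching equations and the coordinate hyperplanes are linearly independent.
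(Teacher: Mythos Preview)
Your proposal is correct and follows exactly the same approach as the paper: the paper simply remarks that the inequality is an immediate consequence of the algebraic adjacency test (Lemma~\ref{l-algadj}), since that lemma describes the intersection of $|Z(\mathbf{u}) \cap Z(\mathbf{w})| + i$ hyperplanes as a subspace of dimension two. Your write-up just makes explicit the codimension bound $\dim L \geq d - (i + |Z(\mathbf{u}) \cap Z(\mathbf{w})|)$ that the paper leaves implicit.
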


This is an immediate consequence of the algebraic test (Lemma~\ref{l-algadj}),
which describes the intersection of $|Z(\mathbf{u}) \cap Z(\mathbf{w})| + i$
hyperplanes as a subspace of dimension two.
The real strength of Lemma~\ref{l-dimfilter} is that
it only requires knowledge of the set $Z(\mathbf{u}) \cap Z(\mathbf{w})$.
Therefore we can use it as a fast prefilter for adjacency testing---for
any pair of vertices $\mathbf{u},\mathbf{w} \in V_i$ we first check
(\ref{eqn-dimfilter}), and only run the full combinatorial test
if the inequality holds.

We proceed now to strengthen the original result of Fukuda and Prodon.
Our aim is to replace $i$ with a smaller number in the inequality
(\ref{eqn-dimfilter}), thus filtering out even more non-adjacent pairs
$\mathbf{u},\mathbf{w}$.  A trivial way to do this is to not count redundant
hyperplanes---we can easily change the inequality to
$|Z(\mathbf{u}) \cap Z(\mathbf{w})| + \rank{i} \geq d - 2$, where
$\rank{i}$ is the number of {\em independent} hyperplanes in the
collection $H_1,\ldots,H_i$.

What is perhaps less obvious is that we can also avoid counting any
hyperplane $H_j$ that does not slice {\em between} vertices of the
previous set $V_{j-1}$ (even if this hyperplane is linearly independent
of the others) and that this works even if $V_{j-1}$ is a {\em filtered}
vertex set.  The full result is as follows:

\begin{lemma}[Extended Dimensional Filtering] \label{l-extdimfilter}
    Consider the double description method (Algorithm~\ref{a-dd}), with or
    without vertex filtering (Algorithm~\ref{a-filter}).  Let $H_k$
    be some matching hyperplane, defined by the matching equation
    $\mathbf{m}^{(k)} \cdot \mathbf{x} = 0$.  We say that $H_k$ is
    {\em pseudo-separating} if there exist vertices
    $\mathbf{v}',\mathbf{v}'' \in V_{k-1}$ for which
    $\mathbf{m}^{(k)} \cdot \mathbf{v}' < 0 < \mathbf{m}^{(k)} \cdot \mathbf{v}''$
    (in other words, there are vertices of the old set $V_{k-1}$ on both
    sides of the hyperplane $H_k$).  This definition is
    illustrated in Figure~\ref{fig-pseudosep}.

    Now consider some intermediate polytope $P_i \subseteq \R^d$,
    with two compatible vertices $\mathbf{u},\mathbf{w} \in V_i$.
    If $\mathbf{u}$ and $\mathbf{w}$ are adjacent vertices of $P_i$, then
    \begin{equation} \label{eqn-extdimfilter}
    |Z(\mathbf{u}) \cap Z(\mathbf{w})| + \mathrm{sep}(i) \geq d - 2,
    \end{equation}
    where $\mathrm{sep}(i)$ is the number of pseudo-separating hyperplanes
    in the list $H_1,H_2,\ldots,H_i$.

    \begin{figure}[htb]
    \centerline{\includegraphics{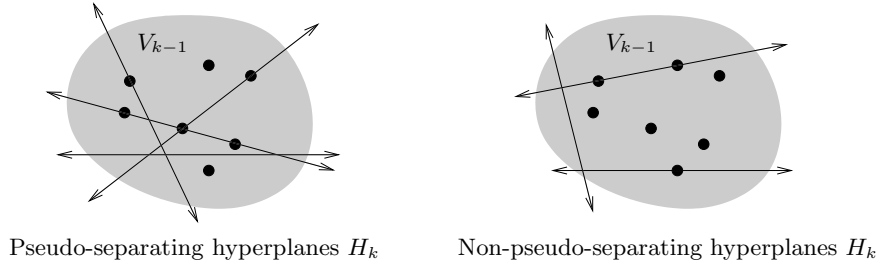}}
    \caption{Illustrating pseudo-separating hyperplanes for a
        vertex set $V_{k-1}$}
    \label{fig-pseudosep}
    \end{figure}
\end{lemma}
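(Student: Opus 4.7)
The plan is to leverage the algebraic adjacency test (Lemma~\ref{l-algadj}), which recasts the adjacency of $\mathbf{u}$ and $\mathbf{w}$ in $P_i$ as the condition $\rank{\{\mathbf{m}^{(k)} : k \leq i\} \cup \{\mathbf{e}_j : j \in Z\}} = d - 2$, where $Z = Z(\mathbf{u}) \cap Z(\mathbf{w})$ and $\mathbf{e}_j$ denotes the $j$th standard basis vector. Inequality~(\ref{eqn-extdimfilter}) is immediate once this rank is shown to be unchanged after deleting every non-pseudo-separating matching normal, since the reduced generating set then has at most $\mathrm{sep}(i) + |Z|$ elements. I therefore reduce the whole proof to the linear-algebraic claim that, for each non-pseudo-separating $H_k$ with $k \leq i$,
\[
\mathbf{m}^{(k)} \in \mathrm{span}\Bigl(\{\mathbf{m}^{(k')} : k' < k\} \cup \{\mathbf{e}_j : j \in Z\}\Bigr),
\]
and then use a straightforward induction on $k$ (the smallest non-pseudo-separating index being the base case, where all earlier matching normals are already pseudo-separating) to iteratively substitute any non-pseudo-separating $\mathbf{m}^{(k')}$ that still appears in the span by its own reduced expansion.

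The heart of the argument is this decomposition, which I would derive by LP duality. After swapping signs if necessary, assume $\mathbf{m}^{(k)} \cdot \mathbf{v} \geq 0$ for every $\mathbf{v} \in V_{k-1}$. Farkas' lemma applied to the polyhedral cone $\{\mathbf{x} \geq \mathbf{0} \,|\, \mathbf{m}^{(j')} \cdot \mathbf{x} = 0 \text{ for } j' < k\}$ then produces scalars $\lambda_j \geq 0$ and $\mu_{j'} \in \R$ with $\mathbf{m}^{(k)} = \sum_j \lambda_j \mathbf{e}_j + \sum_{j' < k} \mu_{j'} \mathbf{m}^{(j')}$. Because $\mathbf{u} \in V_i \subseteq P_i \subseteq P_k \subseteq H_k$, the vertex $\mathbf{u}$ can be written as a convex combination of old vertices $\mathbf{v}_\ell \in V_{k-1}$, and non-negativity of $\mathbf{m}^{(k)}$ on those vertices together with $\mathbf{m}^{(k)} \cdot \mathbf{u} = 0$ forces each supporting $\mathbf{v}_\ell$ to satisfy $\mathbf{m}^{(k)} \cdot \mathbf{v}_\ell = 0$. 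Plugging $\mathbf{v}_\ell$ into the Farkas expression yields $0 = \sum_j \lambda_j (\mathbf{v}_\ell)_j$, a non-negative sum whose vanishing gives $\lambda_j > 0 \Rightarrow j \in Z(\mathbf{v}_\ell)$. Since $Z(\mathbf{u}) = \bigcap_\ell Z(\mathbf{v}_\ell)$, the support of $(\lambda_j)$ is confined to $Z(\mathbf{u})$; running the same argument with $\mathbf{w}$ in place of $\mathbf{u}$ sharpens the support to $Z(\mathbf{u}) \cap Z(\mathbf{w}) = Z$, exactly as required.

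The main obstacle I anticipate is the filtered variant, in which $V_{k-1}$ is a proper subset of the full vertex set of $P_{k-1}$, so filtered non-pseudo-separating only guarantees $\mathbf{m}^{(k)} \geq 0$ on the filtered vertices and Farkas cannot be invoked for $P_{k-1}$ verbatim. To bridge this I would first show by induction on the algorithm steps that every vertex produced by the filtering algorithm lies in $\mathrm{conv}(V_{k-1}^{\mathrm{filt}})$ for each $k \leq i$, since every point created in step~2(c) is a convex combination of two compatible predecessors. This keeps the complementary-slackness portion of the argument intact, because $\mathbf{u}$ and $\mathbf{w}$ still admit convex expansions with supporting vertices drawn from the filtered $V_{k-1}$. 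For the Farkas-type step one then invokes LP duality on the sub-cone generated by $V_{k-1}^{\mathrm{filt}}$; the resulting decomposition initially has coefficients attached to individual filtered vertices, but since every such vertex lies in $H_1 \cap \cdots \cap H_{k-1}$, those vertex-contributions can be absorbed into the span of $\mathbf{m}^{(1)}, \ldots, \mathbf{m}^{(k-1)}$ to recover a decomposition of the same form as in the unfiltered case.
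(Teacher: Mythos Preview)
Your Farkas-based approach works cleanly in the unfiltered case and is a genuinely different route from the paper's: where you show directly that each non-pseudo-separating $\mathbf{m}^{(k)}$ is redundant in the span of $\{\mathbf{m}^{(k')} : k' < k\} \cup \{\mathbf{e}_j : j \in Z\}$, the paper instead argues geometrically that the minimal face containing $\mathbf{u},\mathbf{w}$ is unchanged when all non-pseudo-separating hyperplanes are removed, and then simply invokes Lemma~\ref{l-dimfilter} on the pruned polytope $P_i' = O \cap J \cap K_1 \cap \cdots \cap K_p$.

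The filtered case, however, has a genuine gap. Your proposed fix invokes LP duality on the sub-cone $C = \mathrm{cone}(V_{k-1}^{\mathrm{filt}})$, but membership $\mathbf{m}^{(k)} \in C^*$ only expresses $\mathbf{m}^{(k)}$ as a conic combination of the \emph{facet normals} of $C$ plus an element of the lineality space $(\mathrm{span}\,C)^\perp$. Neither piece has the required form: $C$ can have facets not inherited from the non-negative orthant (so its facet normals need not be coordinate vectors $\mathbf{e}_j$), and $(\mathrm{span}\,C)^\perp$ can strictly contain $\mathrm{span}(\mathbf{m}^{(1)},\ldots,\mathbf{m}^{(k-1)})$ whenever the filtered vertices fail to span all of $H_1 \cap \cdots \cap H_{k-1}$. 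The phrase ``coefficients attached to individual filtered vertices'' suggests a conflation of the generators of $C$ with those of its dual $C^*$. What is missing is precisely the role of \emph{compatibility}: the paper uses the compatibility of $\mathbf{u}$ and $\mathbf{w}$ to show that every vertex of the minimal face (of the relevant intermediate polytope) containing them already satisfies the quadrilateral constraints and hence survives filtering; this is what upgrades ``$\mathbf{m}^{(k)} \geq 0$ on filtered vertices'' to a statement strong enough to control the rank. Salvaging your approach would require restricting Farkas to that minimal face, at which point you are essentially reproducing the paper's argument.
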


Before proving this result, we pause to make some observations.
Not only is this result stronger than Lemma~\ref{l-dimfilter} (since
it is clear that $\mathrm{sep}(i) \leq i$), but it is just as fast to
test---we know when a hyperplane is pseudo-separating because the
corresponding sets $S_+$ and $S_-$ are both non-empty in step~2(a) of
the double description method.

It is again worth noting that Lemma~\ref{l-extdimfilter} remains true
even if we use vertex filtering.  Because each filtered vertex set
$V_i$ is potentially much smaller than the number of vertices of $P_i$,
we can hope to see fewer pseudo-separating hyperplanes as a result
(and thereby strengthen our dimensional filtering).  Indeed, this behaviour
is observed for many ideal triangulations in the cusped census of
Callahan et al.~\cite{callahan99-cuspedcensus}.

We proceed with a proof of Lemma~\ref{l-extdimfilter}.

\begin{proof}
    The following argument assumes the double description method is used
    {\em with} vertex filtering.  For the non-filtered case, simply remove
    all references to filtering in this proof.

    Consider the polytope $P_i = O \cap J \cap H_1 \cap \ldots \cap H_i$
    as described in the statement of Lemma~\ref{l-extdimfilter}, and let
    $V_i$ be the filtered vertex set of $P_i$.
    In the list $H_1,\ldots,H_i$, denote the pseudo-separating hyperplanes by
    $K_1,\ldots,K_p$ and the non-pseudo-separating hyperplanes by
    $L_1,\ldots,L_q$ (so that $p+q=i$).  It is important that
    the hyperplanes in each new list be kept in the same order as in the
    original list $H_1,\ldots,H_i$.

    Define the new polytope $P_i'$ to be the intersection
    $O \cap J \cap K_1 \cap \ldots \cap K_p$ (i.e., the intersection of
    the unit simplex with only the pseudo-separating hyperplanes), and
    let $V_i'$ be the filtered vertex set of $P_i'$.
    Then the original polytope $P_i$ can be expressed as
    $P_i = P_i' \cap L_1 \cap \ldots \cap L_q$.

    We can recover the original filtered vertices $V_i$
    from $V_i'$ using a ``reordered'' double description method.  We begin
    with $P_i'$ and its filtered vertices $V_i'$, and then intersect each
    hyperplane $L_1,\ldots,L_q$ in turn, as described by Algorithms~\ref{a-dd}
    and~\ref{a-filter}.

    In this reordered double description method, let $Q_j$ denote the
    intermediate polytope $Q_j = P_i' \cap L_1 \cap \ldots \cap L_j$, and
    let $W_j$ denote the filtered vertex set of $Q_j$ (in particular,
    $W_0 = V_i'$ and $W_q = V_i$).
    Then we can make the following observations:
    \begin{enumerate}[(i)]
        \item \label{en-ext-newdd}
        Consider the stage of this new double description method where
        we intersect $Q_{j-1}$ with the hyperplane $L_j$ to form the
        polytope $Q_j$.  Then one of the sets $S_+$ and $S_-$ as
        described in step~2(a) of Algorithm~\ref{a-dd} is empty.
        That is, all of the vertices in the previous
        set $W_{j-1}$ lie on $L_j$ and/or to the {\em same side} of $L_j$.

        This can be seen as follows.  Suppose $L_j$ appears as $H_{j'}$
        in the original list $H_1,\ldots,H_i$.  Then
        $Q_{j-1} = H_1 \cap \ldots \cap H_{j'-1} \cap
        K_1 \cap \ldots \cap K_p$ (note that some hyperplanes might be
        repeated in this list).
        Therefore the filtered vertices of $Q_{j-1}$ are all convex
        combinations of the filtered vertices of
        $P_{j'-1} = H_1 \cap \ldots \cap H_{j'-1}$, and since
        $H_{j'}=L_j$ is not pseudo-separating, these vertices
        all lie on and/or to the same side of $L_j$.

        \item \label{en-ext-inc}
        From observation~(\ref{en-ext-newdd}) above,
        the filtered vertex set $W_j$ of the intermediate polytope $Q_j$
        is precisely $W_j = W_{j-1} \cap L_j$.  That is, when we create
        $W_j$ in our reordered double description method, we simply keep
        those vertices of $W_{j-1}$ that lie in the hyperplane $L_j$ and throw
        the others away.  In particular, because one of $S_+$ and $S_-$
        is empty, no {\em new} vertices can be created.

        \item \label{en-ext-supsets}
        Following observation~(\ref{en-ext-inc}) to its conclusion,
        the vertex sets satisfy
        \[ V_i' = W_0 \supseteq W_1 \supseteq \ldots \supseteq W_q = V_i .\]
        As a side note, although $V_i' \supseteq V_i$, it is not necessarily
        true that all {\em unfiltered} vertices of $P_i$ are also vertices of
        $P_i'$.  This is because pseudo-separation is defined only in
        terms of {\em filtered} vertices.
    \end{enumerate}

    Now consider two compatible vertices $\mathbf{u},\mathbf{w} \in V_i$.
    Observation~(\ref{en-ext-supsets}) shows that
    $\mathbf{u}$ and $\mathbf{w}$ are vertices of each polytope $Q_j$.
    Let $F_j$ be the (unique) mini\-mal-di\-men\-sion\-al face of
    $Q_j$ containing both $\mathbf{u}$ and $\mathbf{w}$.  We can
    make the following observations about each face $F_j$:
    \begin{enumerate}[(i)]
        \setcounter{enumi}{3}

        \item \label{en-ext-facefilter}
        Every vertex of $F_j$ is in the filtered set $W_j$.

        This can be shown using zero sets.  Let $X \subseteq \R^d$
        be the subspace formed by setting every coordinate in
        $Z(\mathbf{u}) \cap Z(\mathbf{w})$ equal to zero.  That is,
        \[ X = \{\mathbf{x}\,|\,x_i=0\ \mbox{for all}\ i \in
            Z(\mathbf{u}) \cap Z(\mathbf{w})\} .\]
        Because $Q_j$ is a polytope in the non-negative orthant,
        each equation $x_i=0$ is a supporting hyperplane for $Q_j$.
        Therefore $X \cap Q_j$ is a face of $Q_j$, and moreover contains both
        $\mathbf{u}$ and $\mathbf{w}$.  By minimality, $F_j$ is a subface
        of $X \cap Q_j$, and so every vertex of $F_j$ lies in $X$.

        Because $\mathbf{u}$ and $\mathbf{w}$ are compatible,
        Lemma~\ref{l-zerosetcompat} shows that
        $Z(\mathbf{u}) \cap Z(\mathbf{w})$ is missing at most one
        quadrilateral coordinate per tetrahedron.  This means that every
        point in $X$ satisfies the quadrilateral constraints, and in
        particular, so do the vertices of $F_j$.  Thus the vertices of
        $F_j$ (which are also vertices of the enclosing polytope $Q_j$)
        belong to the filtered set $W_j$.

        \item \label{en-ext-nextface}
        For each $j$, the faces $F_{j-1}$ and $F_j$ are identical.

        This follows from our earlier observations.  From
        (\ref{en-ext-facefilter}) and (\ref{en-ext-newdd}) we see that
        $L_j$ is a supporting hyperplane for $F_{j-1}$, and so
        $F_{j-1} \cap L_j$ is a subface of $F_{j-1}$ containing
        $\mathbf{u}$ and $\mathbf{w}$; by minimality it follows that
        $F_{j-1} \cap L_j = F_{j-1}$.

        On the other hand, since $F_{j-1}$ is a face of the polytope
        $Q_{j-1}$, we see that $F_{j-1} \cap L_j$ is a face of
        $Q_{j-1} \cap L_j = Q_j$, again containing both $\mathbf{u}$
        and $\mathbf{w}$.  Thus $F_j$ is a subface of
        $F_{j-1} \cap L_j = F_{j-1}$, and by minimality again it follows
        that $F_j = F_{j-1}$.

        \item \label{en-ext-allfaces}
        Carrying observation~(\ref{en-ext-nextface}) to its conclusion,
        we have $F_0 = F_1 = \ldots = F_q$.
    \end{enumerate}

    We are finally ready to examine the problem of adjacency.
    Vertices $\mathbf{u}$ and $\mathbf{w}$ are adjacent in the polytope
    $Q_j$ if and only if the face $F_j$ is an edge.
    From observation~(\ref{en-ext-allfaces}) it follows that
    $\mathbf{u}$ and $\mathbf{w}$ are adjacent in $P_i'=Q_0$
    if and only if they are adjacent in $P_i=Q_q$.

    Our main result (Lemma~\ref{l-extdimfilter}) now follows immediately
    by applying the earlier Lemma~\ref{l-dimfilter} to the polytope $P_i'$.
\end{proof}

We conclude with a note regarding further generalizations of
Lemma~\ref{l-extdimfilter}.  A key property of non-pseudo-separating
hyperplanes is that, in the double description method, they do not
create any {\em new} vertices (though they can remove old ones).

We might hope therefore to extend Lemma~\ref{l-extdimfilter}
to ``avoid counting'' all hyperplanes with this property; for instance, we
might hope to avoid hyperplanes that are pseudo-separating but that do not
produce any {\em compatible} pairs of vertices on either side.  It turns
out that this cannot be done---although counterexamples are extremely
rare,\footnote{Only one counterexample was found in the entire
10-tetrahedron census of closed orientable and non-orientable
manifolds \cite{burton07-nor10}.} they can be found.

\subsection{Inner Product Representation} \label{s-opt-product}

Recall from the opening notes of Section~\ref{s-opt} that memory (unlike
time) is often a resource with hard limits, and that the worst cases for
the double description method are exponential in memory as well as time.
In our final improvement to the double description method, we take aim
directly at memory consumption.

The high memory usage of the double description method comes almost
entirely from storing the intermediate vertex sets $V_i$.  Traditionally
each vertex is stored as a sequence of coordinates in $\R^d$, though in
Section~\ref{s-opt-micro} we extend this marginally by adding a bitmask
for the zero set.  Therefore, if we are to reduce memory usage, we have
one of two options:
\begin{itemize}
    \item Find a way to reduce the sizes of the vertex sets $V_i$, which
    is the approach taken in Section~\ref{s-opt-sort};
    \item Find a way to avoid storing the full coordinates of each
    vertex, which is the approach that we take here.
\end{itemize}

Our strategy therefore is to discard the individual coordinates of each
vertex $\mathbf{v} \in V_i$, and instead to store only ``essential
information'' from which we can recover the full coordinates if we need
to.  In fact we already have this essential information---it is all
contained in the zero set $Z(\mathbf{v})$.

\begin{lemma} \label{l-zeroenough}
    At any stage of the double description method, the zero set
    $Z(\mathbf{v})$ contains sufficient information to recover the
    entire vertex $\mathbf{v} \in V_i$.  More precisely, the full
    coordinates of $\mathbf{v}$
    can be recovered by solving the following simultaneous equations:
    \begin{itemize}
        \item the matching equations $\mathbf{m}^{(k)} \cdot \mathbf{v} = 0$
        for $k=1,\ldots,i$;
        \item the projective equation $\sum v_j = 1$;
        \item the facet equations $v_j = 0$ for each $j \in Z(\mathbf{v})$.
    \end{itemize}
\end{lemma}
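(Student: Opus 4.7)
The plan is to prove this as a standard extreme-point argument: the listed system of linear equations cuts out an affine subspace $S \subseteq \R^d$; the vector $\mathbf{v}$ is manifestly one solution (by construction of $P_i$ and by the definition of $Z(\mathbf{v})$), so all that really needs proving is that $S$ is a single point. The natural way to get uniqueness is to exploit the fact that $\mathbf{v}$ is a vertex of the polytope $P_i = O \cap J \cap H_1 \cap \ldots \cap H_i$, hence an extreme point.

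Concretely, I would first note that $\mathbf{v}$ satisfies every listed equation: the matching and projective equations because $\mathbf{v} \in P_i \subseteq J \cap H_1 \cap \ldots \cap H_i$, and the facet equations $v_j = 0$ directly from the definition $Z(\mathbf{v}) = \{j : v_j = 0\}$. Then, for uniqueness, I would argue by contradiction: suppose $S$ contains a point other than $\mathbf{v}$, so the associated linear subspace contains a nonzero direction $\mathbf{d}$. The facet equations force $d_j = 0$ for every $j \in Z(\mathbf{v})$, so the coordinates of $\mathbf{v} + t\mathbf{d}$ at indices in $Z(\mathbf{v})$ remain exactly zero. The remaining coordinates of $\mathbf{v}$ are strictly positive, so for all sufficiently small $|t|$ the coordinates $v_j + t d_j$ with $j \notin Z(\mathbf{v})$ stay strictly positive.

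Combining these observations, $\mathbf{v} + t\mathbf{d} \in O$ for small $|t|$; and because $\mathbf{v} + t\mathbf{d} \in S$ it also lies in $J \cap H_1 \cap \ldots \cap H_i$. Thus a short open line segment through $\mathbf{v}$ lies inside $P_i$, so $\mathbf{v}$ is not an extreme point. This contradicts $\mathbf{v}$ being a vertex of $P_i$, completing the proof.

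I do not expect any serious obstacle; the only subtlety is making sure we do not accidentally conclude that $\mathbf{v}$ is a vertex ``because the system has a unique solution'' — the logical arrow runs the other way, and we must invoke the vertex property of $\mathbf{v}$ (inherited from $\mathbf{v} \in V_i$) as the input. It is also worth checking that the argument is insensitive to vertex filtering, which it is: the lemma takes $\mathbf{v} \in V_i$ as given, and $V_i$ contains only true vertices of $P_i$ whether or not filtering is applied. A minor remark worth including is that in practice solving this system is cheap because it is highly overdetermined yet consistent, so elementary linear algebra recovers $\mathbf{v}$ immediately.
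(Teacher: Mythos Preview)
Your argument is correct and rests on the same idea as the paper's: both use that $\mathbf{v}$ is a genuine vertex of $P_i$, and hence is pinned down by the supporting hyperplanes (here the coordinate hyperplanes $x_j=0$) that it lies on, together with the affine hull $J \cap H_1 \cap \ldots \cap H_i$. The paper simply invokes the standard polytope fact ``a vertex is the intersection of the facets containing it'' in one line, whereas you unpack that fact via the extreme-point contradiction (a nonzero direction in the solution space would yield a short segment through $\mathbf{v}$ inside $P_i$); the content is the same.
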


\begin{proof}
    This is an immediate consequence of the fact that a vertex of a
    polytope is the intersection of the facets that it belongs to.
    For each intermediate polytope $P_i$, the facets of $P_i$ are
    described by inequalities of the form $x_j \geq 0$ (deriving from
    the non-negative orthant).\footnote{Some of these inequalities may be
    redundant, describing lower dimensional or empty faces.  Nevertheless,
    every facet is described by an inequality of this form.}
    The facets that a vertex $\mathbf{v}$ belongs to are those for
    which $v_j = 0$, that is, those corresponding to coordinate positions
    $j \in Z(\mathbf{v})$.
\end{proof}

What Lemma~\ref{l-zeroenough} shows is that we could store {\em only the
zero sets} of our vertices, with no coordinates whatsoever.  Because
zero sets are stored as bitmasks that take very little space
(Section~\ref{s-opt-micro}), this would be a magnificent improvement in
memory consumption.  However, it could slow down our algorithm terribly,
since we would need to solve the equations of Lemma~\ref{l-zeroenough}
each time we wanted to analyze or manipulate any vertex $\mathbf{v} \in V_i$.

If we analyze the algorithms and improvements of Sections~\ref{s-dd}
and~\ref{s-opt}, we find that the only operations we need to perform on
vertices are the following:
\begin{enumerate}[(i)]
    \item Creating $d$ unit vectors for the initial set $V_0$;
    \item Testing the sign of $\mathbf{m}^{(i)} \cdot \mathbf{v}$ for
    $\mathbf{v} \in V_{i-1}$;
    \item Creating a new vertex
    $\mathbf{v} = \overline{\mathbf{u}\mathbf{w}} \cap H_i$ from two
    old vertices
    $\mathbf{u},\mathbf{w} \in V_{i-1}$, which is done by computing
    \[ \mathbf{v} = \frac{(\mathbf{m}^{(i)} \cdot \mathbf{u}) \mathbf{w} -
    (\mathbf{m}^{(i)} \cdot \mathbf{w}) \mathbf{u}}
    {(\mathbf{m}^{(i)} \cdot \mathbf{u}) - (\mathbf{m}^{(i)} \cdot \mathbf{w})}\ ;\]
    \item Manipulations involving zero sets (such as compatibility
    testing, the combinatorial adjacency test, or extended dimensional
    filtering);
    \item Outputting the final solutions, as stored in the final vertex
    set $V_g$.
\end{enumerate}
The only non-trivial operation in this list is the inner product
$\mathbf{m}^{(i)} \cdot \mathbf{v}$, which suggests storing vectors using
the following representation:

\begin{defn}[Inner Product Representation] \label{d-inner}
    Consider some vertex $\mathbf{v} \in V_i$ in the double description
    method (Algorithm~\ref{a-dd}).  We define the {\em inner product
    representation} $I(\mathbf{v})$ to be the $(g-i)$-dimensional vector
    \[ \left( \mathbf{m}^{(i+1)} \cdot \mathbf{v},~
         \mathbf{m}^{(i+2)} \cdot \mathbf{v},~
         \ldots,~
         \mathbf{m}^{(g)} \cdot \mathbf{v} \right), \]
    recalling that there are $g$ matching equations in total.

    We use the inner product representation by storing
    $I(\mathbf{v})$ instead of the full coordinates
    for each intermediate vertex $\mathbf{v} \in V_i$.
    We continue to store the zero sets $Z(\mathbf{v})$ as bitmasks so that
    (by Lemma~\ref{l-zeroenough}) no information is lost.
\end{defn}

In general, the inner product representation is cheaper to
store than the full coordinates of a vertex, and grows significantly
cheaper still as the algorithm progresses.  In the standard coordinate
system of Section~\ref{s-normal}, we work in dimension $d=7n$ but have at
most $g \leq 6n$ matching equations.  In Tollefson's quadrilateral
coordinates we work in dimension $d=3n$ but (assuming a closed one-vertex
triangulation) have a mere $n+1$ matching equations.  As a result,
our algorithm starts out using a modest $6/7$ or $1/3$ of the original
storage respectively, and as $i \to g$ in the later stages of the algorithm
our memory consumption shrinks almost to zero.
By the time we reach $i=g$ the only storage remaining is the (very cheap)
bitmasks for our zero sets.

It is important that the greatest benefits of the inner product
representation arise in the later stages of the algorithm.  Anecdotal
evidence (observed time and time again) suggests that the worst explosions
of vertex sets $V_i$ tend to occur in later stages of the algorithm.  This
means that our new representation focuses its optimizations where they are
needed the most.

While the inner product representation gives a significant improvement in
memory consumption, it is important to understand how it affects running
time.  We therefore consider each of the five vertex operations listed earlier:

\begin{enumerate}[(i)]
    \item Creating $d$ unit vectors for the initial set $V_0$ is easy.
    If $\mathbf{v}_j$ is the $j$th unit vector, then the elements of
    $I(\mathbf{v}_j)$ are the $j$th coordinates of the matching equations
    $\mathbf{m}^{(1)},\ldots,\mathbf{m}^{(g)}$.

    \item \label{en-op-sign}
    Testing the sign of $\mathbf{m}^{(i)} \cdot \mathbf{v}$ for
    $\mathbf{v} \in V_{i-1}$ is very easy; we simply look at the first
    element of $I(\mathbf{v})$.

    \item Computing the intersection
    $\mathbf{v} = \overline{\mathbf{u}\mathbf{w}} \cap H_i$ for
    $\mathbf{u},\mathbf{w} \in V_{i-1}$ and $\mathbf{v} \in V_i$
    is much the same as in the standard algorithm.  Using
    \[ \mathbf{v} = \frac{(\mathbf{m}^{(i)} \cdot \mathbf{u}) \mathbf{w} -
    (\mathbf{m}^{(i)} \cdot \mathbf{w}) \mathbf{u}}
    {(\mathbf{m}^{(i)} \cdot \mathbf{u}) - (\mathbf{m}^{(i)} \cdot \mathbf{w})} \]
    it is easy to show that
    \[ I(\mathbf{v}) = \mathrm{trunc}\left[
        \frac{\mathrm{head}[I(\mathbf{u})]\,I(\mathbf{w}) -
        \mathrm{head}[I(\mathbf{w})]\,I(\mathbf{u})}
        {\mathrm{head}[I(\mathbf{u})] - \mathrm{head}[I(\mathbf{w})]}\right],\]
    where $\mathrm{head}[\ldots]$ denotes the first element of a vector,
    and where $\mathrm{trunc}[\ldots]$ indicates removing the first element
    from a vector.

    \item \label{en-op-bits} Manipulations involving zero sets are all done
    using bitmasks, and are not affected at all by the change in vector
    representation.

    \item Outputting the final solutions requires us to solve the
    equations of Lemma~\ref{l-zeroenough} for each vertex in the final
    set $V_g$.
\end{enumerate}

The running times for these operations under both old and new
vertex representations are listed in Table~\ref{tab-repops}
(excluding zero set manipulations, which are irrelevant here).
Since $g < d$ in general, we find that most of these operations
are in fact faster using the inner product representation.

\begin{table}[htb]
\centerline{\small \begin{tabular}{l|l|l}
    Operation & Full coordinate rep. & Inner product rep.\\
    \hline
    Creating $d$ unit vectors & $O(d^2)$ & $O(gd)$ \\
    Testing sign of $\mathbf{m}^{(i)} \cdot \mathbf{v}$ & $O(d)$ & $O(1)$ \\
    Computing $\overline{\mathbf{u}\mathbf{w}} \cap H_i$ & $O(d)$ & $O(g)$ \\
    Outputting final solutions & $O(|V_g|)$ & $O(g^2 d \cdot |V_g|)$
\end{tabular}}
\caption{Time complexities for various vertex operations}
\label{tab-repops}
\end{table}

There is only one operation for which the inner product representation is
slower, which is outputting the final solutions.  Here we must solve a
full system of equations for each vertex in $V_g$ (the complexity estimate in
Table~\ref{tab-repops} assumes a simple implementation using matrix row
operations).  However:
\begin{itemize}
    \item The output operation does not happen often.  We only output
    solutions at the very end of the algorithm, unlike operations
    such as $\mathbf{m}^{(i)} \cdot \mathbf{v}$ or
    $\overline{\mathbf{u}\mathbf{w}} \cap H_i$ which we perform
    many times at every stage of the double description method.

    \item The number of systems of equations we must solve is $|V_g|$,
    which is not large.
    Experimental evidence suggests that the final solution set $V_g$ is
    typically small, often orders of magnitude smaller than the worst
    intermediate vertex sets $V_i$ (see for instance
    Table~\ref{tab-hyp} in the following section).
    This is likely due to the quadrilateral constraints---as we enforce
    more matching equations, we are forced to make more quadrilateral
    coordinates non-zero, and we can filter out more vertices as a result.
\end{itemize}

We hope therefore that this extra cost in outputting the final solutions is
insignificant, and indeed this is seen experimentally in
Section~\ref{s-expt-time}---the losses in the output operation are far
outweighed by the other gains described above, and the inner product
representation yields better performance in both memory usage and running time.

\section{Experimentation} \label{s-expt}

Having developed several improvements to the normal surface
enumeration algorithm, we now road-test these improvements using a collection
of real examples, measuring both running time and memory consumption.

In the following tests, we enumerate surfaces in both the standard
coordinate system of Section~\ref{s-normal} and the quadrilateral
coordinates of Tollefson \cite{tollefson98-quadspace}.
We include both systems because they have some interesting differences:
\begin{itemize}
    \item The matching equations in standard coordinates are {\em all}
    sparse, whereas in quadrilateral coordinates they are only sparse
    {\em on average} (in particular, dense equations are infrequent
    but possible).
    \item The quadrilateral constraints (and hence vertex filtering)
    involve all coordinate positions in
    quadrilateral coordinates, but only $3/7$ of the coordinate positions in
    standard coordinates.
    \item Quadrilateral coordinates work in a smaller dimension than
    standard coordinates ($3n < 7n$), allowing us to run tests on
    larger and more interesting triangulations.
\end{itemize}
For further information on quadrilateral coordinates and the corresponding
matching equations, the reader is again referred to
\cite{tollefson98-quadspace}.

We use 19 different triangulations for our tests: eleven closed hyperbolic
triangulations are used as ``ordinary'' cases, and eight twisted layered
loops are used for extreme ``stress testing''.  In detail:
\begin{itemize}
    \item The closed hyperbolic triangulations are drawn arbitrarily from
    the Hodgson-Weeks census of small-volume closed hyperbolic
    3-manifolds \cite{hodgson94-closedhypcensus}.
    These include six smaller cases ($9 \leq n \leq 13$) for use with standard
    coordinates, and five larger cases ($16 \leq n \leq 20$) for use with
    quadrilateral coordinates.

    \item An $n$-tetrahedron {\em twisted layered loop} is an
    extremely well structured triangulation of the quotient space
    $S^3/Q_{4n}$.  Twisted layered loops are conjectured by Matveev
    to have minimal complexity \cite{matveev98-or6}, and a proof of this claim
    has recently been announced by Jaco, Rubinstein and Tillmann
    \cite{jaco09-coverings}.
    Here we include four smaller cases ($9 \leq n \leq 18$) for
    standard coordinates, and four larger cases ($30 \leq n \leq 75$)
    for quadrilateral coordinates.

    The following properties make twisted layered loops ideal for stress
    testing:
    \begin{itemize}
        \item The tight structure of these triangulations makes vertex
        filtering extremely powerful, allowing us to run tests on very large
        triangulations (up to 75 tetrahedra for quadrilateral coordinates).
        \item In standard coordinates, the final solution set $V_g$ contains
        an exponential number of vertices (specifically
        $F_{n-1} + 2 F_{n-2} + 1$, where $F_0=1$, $F_1=1$, \ldots are the
        Fibonacci numbers).
        Moreover, this is observed to be much larger than the final solution
        set for most census triangulations\footnote{Here we refer to
        censuses of closed compact 3-manifold triangulations, such as
        those described in \cite{burton07-nor10} and
        \cite{hodgson94-closedhypcensus}.}
        of similar size.
        \item In contrast, in quadrilateral coordinates the final solution set
        contains a linear number of vertices (specifically $n+1$), which is
        observed to be very small amongst census triangulations of similar size.
    \end{itemize}
    The reader is referred to \cite{burton09-extreme} for details on the
    final two points, and in particular for proofs of the
    formulae $|V_g| = F_{n-1} + 2 F_{n-2} + 1$ and $|V_g| = n+1$.
\end{itemize}

\begin{table}[htb]
    \centerline{\small \begin{tabular}{r|r|r|r|r}
        \em Tetrahedra & \em Hyp. volume & \em Final set $|V_g|$ &
            \em Max of any $|V_i|$ & \em Dimension $d$ \\
        \hline
        \multicolumn{5}{c}{Standard coordinates} \\
        \hline
        9\phantom{(b)}            & 0.94270736 & 19 & 899 & 63 \\
        10\phantom{(b)}           & 1.75712603 & 30 & 873 & 70 \\
        11\phantom{(b)}           & 2.10863613 & 45 & 2\,221 & 77 \\
        12\rlap{(a)}\phantom{(b)} & 2.93565190 & 64 & 3\,477 & 84 \\
        12(b)                     & 3.02631753 & 54 & 941 & 84 \\
        13\phantom{(b)}           & 3.08076667 & 59 & 1\,891 & 91 \\
        \hline
        \multicolumn{5}{c}{Quadrilateral coordinates} \\
        \hline
        16\phantom{(b)} & 4.27796055 & 48 & 6\,655 & 48 \\
        17\phantom{(b)} & 4.30972819 & 33 & 4\,025 & 51 \\
        18\phantom{(b)} & 4.40945629 & 68 & 3\,335 & 54 \\
        19\phantom{(b)} & 4.58232390 & 95 & 15\,988 & 57 \\
        20\phantom{(b)} & 4.68714601 & 156 & 47\,317 & 60
    \end{tabular}}
    \caption{Statistics for the ``ordinary'' closed hyperbolic triangulations}
    \label{tab-hyp}
\end{table}

\begin{table}[htb]
    \centerline{\small \begin{tabular}{r|c|r|r|r}
        \em Tetrahedra & \em Quotient space & \em Final set $|V_g|$ &
            \em Max of any $|V_i|$ & \em Dimension $d$ \\
        \hline
        \multicolumn{5}{c}{Standard coordinates} \\
        \hline
        9 & $S^3/Q_{36}$ & 77 & 375 & 63 \\
        12 & $S^3/Q_{48}$ & 323 & 1\,585 & 84 \\
        15 & $S^3/Q_{60}$ & 1\,365 & 6\,711 & 105 \\
        18 & $S^3/Q_{72}$ & 5\,779 & 28\,425 & 126 \\
        \hline
        \multicolumn{5}{c}{Quadrilateral coordinates} \\
        \hline
        30 & $S^3/Q_{120}$ & 31 & 171 & 90 \\
        45 & $S^3/Q_{180}$ & 46 & 261 & 135 \\
        60 & $S^3/Q_{240}$ & 61 & 351 & 180 \\
        75 & $S^3/Q_{300}$ & 76 & 441 & 225
    \end{tabular}}
    \caption{Statistics for the ``extreme case'' twisted layered loops}
    \label{tab-twisted}
\end{table}

Tables~\ref{tab-hyp} and~\ref{tab-twisted} give an overview of the
19 triangulations chosen for testing; the two tables cover the
hyperbolic triangulations and the twisted layered loops respectively.
The columns in each table include:
\begin{enumerate}[(i)]
    \item The number of tetrahedra $n$.  The hyperbolic set includes
    two 12-tetrahedron triangulations; these are labelled 12(a) and 12(b)
    for later reference.
    \item The hyperbolic volume in Table~\ref{tab-hyp}, and the
    quotient space in Table~\ref{tab-twisted}.  This information, combined
    with the tables from the Hodgson-Weeks census
    \cite{hodgson94-closedhypcensus}, uniquely identifies each 3-manifold.
    \item The size of the final solution set $V_g$.
    \item The maximum size of {\em any} intermediate vertex set $V_i$,
    under an algorithm that uses all of the improvements of
    Section~\ref{s-opt}.
    \item The dimension of the underlying vertex enumeration problem,
    which is $7n$ or $3n$ for standard or quadrilateral coordinates
    respectively.
\end{enumerate}

The maximum $|V_i|$ figures are particularly interesting.  In
Table~\ref{tab-hyp} they highlight the observation that, for ``ordinary''
triangulations, the intermediate sets $V_i$ can grow orders of magnitude
larger than the final set $V_g$.  In Table~\ref{tab-twisted} they highlight
the strength of vertex filtering in the highly structured twisted layered
loops, where the vertex sets $V_i$ are kept small from start to finish.

The remainder of this section is structured as follows.  In
Section~\ref{s-expt-time} we consider the various improvements presented in
this paper and examine their effect on running time for each of our 19
triangulations.  Likewise, Section~\ref{s-expt-mem} offers a similar
analysis of memory consumption.  In Section~\ref{s-expt-order} we evaluate
our heuristic ordering of hyperplanes in more detail, comparing it against
other standard orderings from the literature.  All experiments are conducted
on a 2.4\,GHz Intel Core~2 machine
using the software package {\regina} \cite{regina,burton04-regina}.

\subsection{Improvements in Running Time} \label{s-expt-time}

We begin our series of experiments with an analysis of running time.
Our aim here is to measure the strength of each individual improvement
presented in Section~\ref{s-opt}.

As a starting point, we begin with the standard double description method
with vertex filtering, as described in Algorithms~\ref{a-dd}
and~\ref{a-filter}.  We then refine the algorithm, adding one improvement at
a time, until we arrive at a final algorithm that incorporates all of
the optimizations described in this paper.

\begin{figure}[htb]
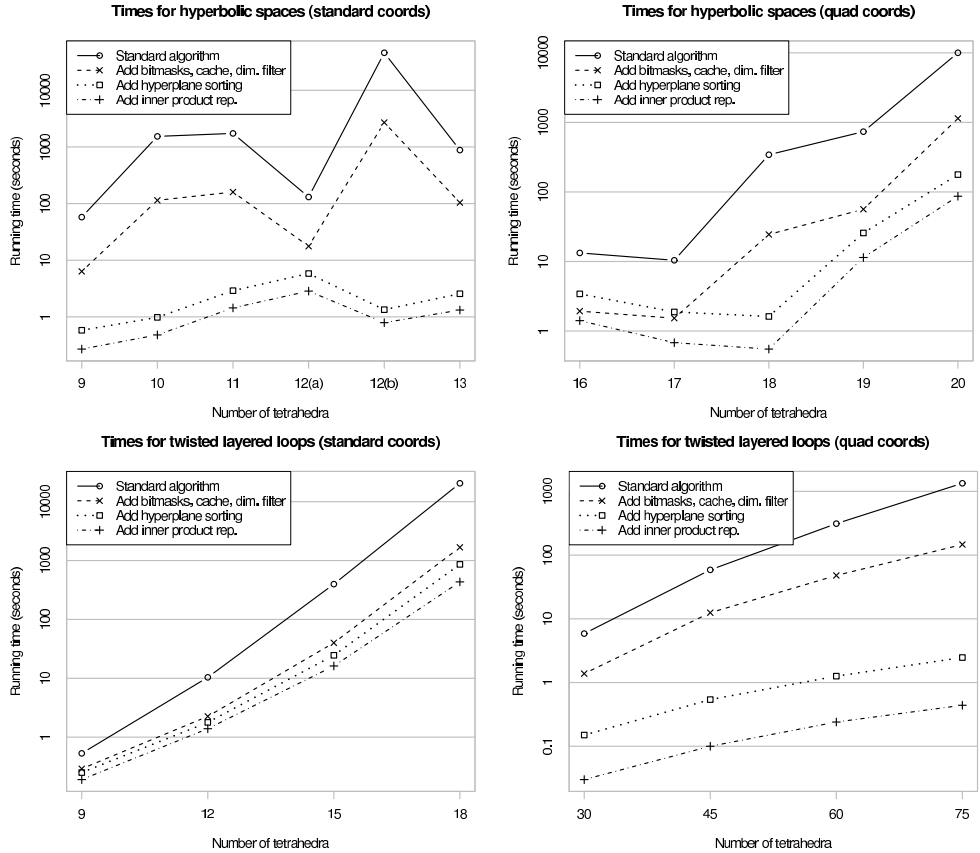

    \trialplots{allinc-time}
    \caption{Improvements in running time for major optimizations}
    \label{fig-trial-allinc-time}
\end{figure}

A summary of results is presented in Figure~\ref{fig-trial-allinc-time},
which compares the following variants:
\begin{enumerate}[(i)]
    \item \label{en-times-standard}
    The standard algorithm, as outlined above.
    \item \label{en-times-bits}
    The standard algorithm with bitmasks and cache optimization
    (Section~\ref{s-opt-micro}) and dimensional filtering
    (Section~\ref{s-opt-dim}).  Because each of these optimizations yields
    only a minor improvement on its own, they are bundled together
    to simplify the graphs.
    \item All of the previous improvements plus hyperplane sorting
    (Section~\ref{s-opt-sort}).
    \item \label{en-times-all}
    All of the previous improvements plus the inner product
    representation (Section~\ref{s-opt-product}).
\end{enumerate}

It should be noted that Figure~\ref{fig-trial-allinc-time} is plotted on a
log scale, which means that each horizontal bar represents a
factor of ten improvement.
Given this, the results are extremely pleasing---the final
algorithm~(\ref{en-times-all}) is often 100 or 1\,000 faster than
the original~(\ref{en-times-standard}), and for one case it runs
over 50\,000 times faster.

The weakest improvement is seen with the twisted layered loops using standard
coordinates, where the size of the final solution set is known to be
exponential; here the bitmasks and cache optimization provide most of
the gains.  Nevertheless, even in these extreme scenarios, both
hyperplane sorting and the inner product representation independently double
the speed for the large case $n=18$, and the final algorithm is still
$\simeq 50$ times as fast as the original.

\begin{figure}[htb]
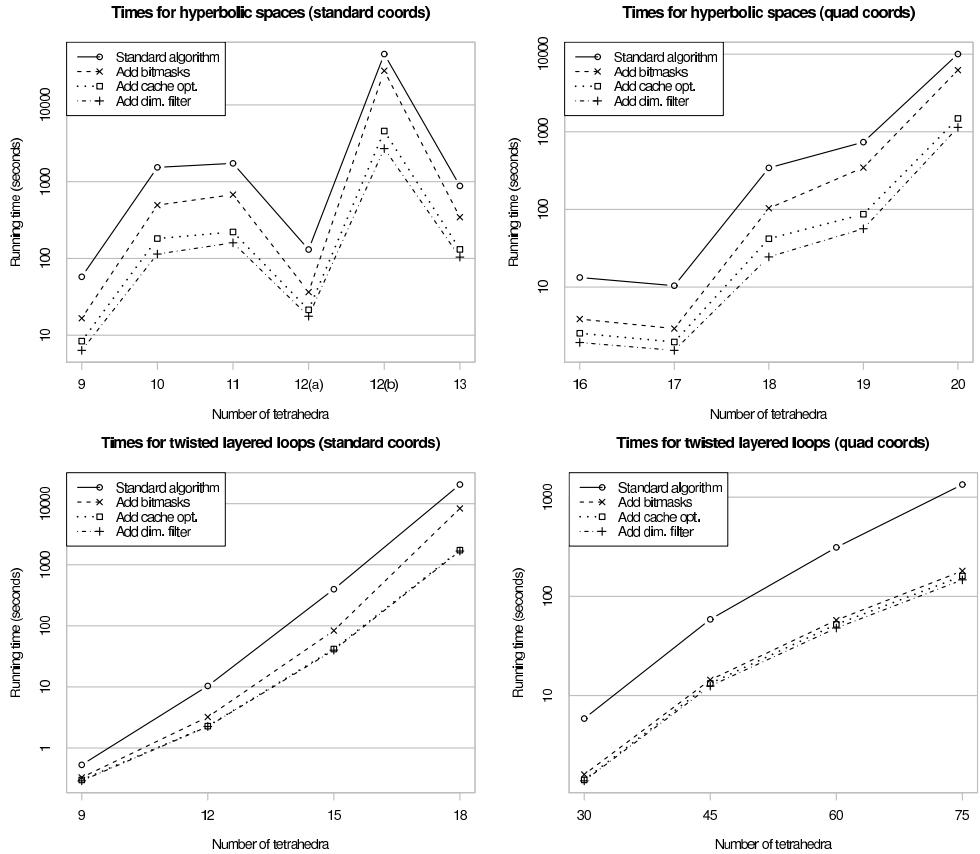

    \trialplots{initinc-time}
    \caption{Details for bitmasks, cache optimization and
        dimensional filtering}
    \label{fig-trial-initinc-time}
\end{figure}

Because bitmasks, cache optimization and dimensional filtering are
bundled together in the main summary of results, we separate
them out in Figure~\ref{fig-trial-initinc-time} to show their
individual effects.  Once more we see the extreme nature of the
twisted layered loops---although all three improvements are
effective on the hyperbolic spaces, some improvements (particularly
dimensional filtering) have very little effect on the twisted layered loops.

It is worth noting that in the best cases, such as where the final algorithm
is $> 1\,000$ times or even $> 10\,000$ times faster, the bulk of the
gains are due to hyperplane sorting.  We return to hyperplane
sorting in greater detail in Section~\ref{s-expt-order}.

\subsection{Improvements in Memory Usage} \label{s-expt-mem}

We continue our series of experiments by measuring the memory consumption
of different variants of our algorithm.  The results are plotted in
Figure~\ref{fig-trial-allinc-mem}, where again we bundle together bitmasks,
cache optimization and dimensional filtering for simplicity.

\begin{figure}[htb]
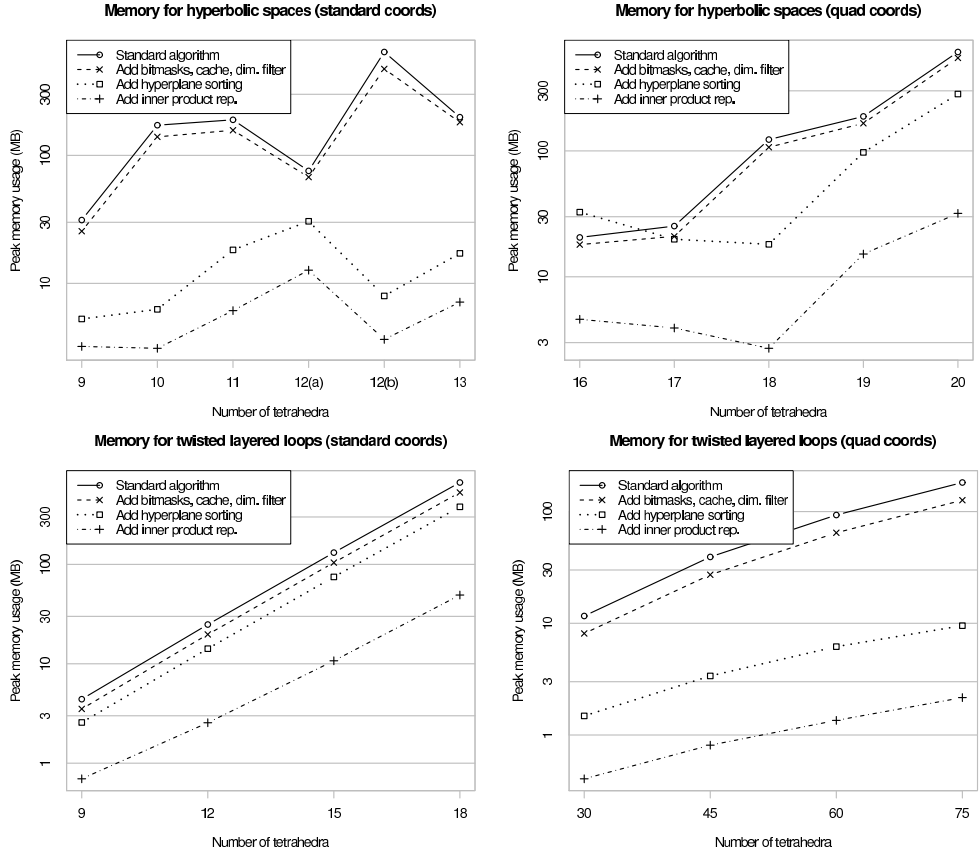

    \trialplots{allinc-mem}
    \caption{Improvements in memory usage for major optimizations}
    \label{fig-trial-allinc-mem}
\end{figure}

To be precise, Figure~\ref{fig-trial-allinc-mem} measures {\em peak memory
usage}, which is defined to be the maximum memory usage at any stage of the
algorithm {\em minus} the memory usage at the beginning of the algorithm.
This means that we only count memory that is genuinely used by the vertex
enumeration algorithm, and not unrelated overhead such as
system libraries or the storage of the program itself.  We measure memory in
megabytes, which we take to mean $10^6$ bytes (not $2^{20}$ bytes
which is sometimes used instead).

Once more, the results are plotted on a log scale; here every
two horizontal bars represent a factor of ten improvement (with a single bar
representing approximately a factor of three).  Again the results are
extremely pleasing---for the large cases we are able to reduce memory
consumption by factors of around 15 to 85, and in one hyperbolic case by
a factor of 175.

It is particularly interesting to examine memory consumption for the
twisted layered loops.  These cases are extreme in both senses---in
standard coordinates we see our weakest improvements (a factor
of $14$ for $n=18$), and in quadrilateral coordinates we see some of
our strongest improvements (a factor of $84$ for $n=75$).  This is not entirely
surprising, since we know that twisted layered loops have extremely large
and extremely small solution sets in standard and
quadrilateral coordinates respectively.

\subsection{Comparison of Hyperplane Orderings} \label{s-expt-order}

It is noted by Fukuda and Prodon \cite{fukuda96-doubledesc} that the ordering
of hyperplanes is critically important for a fast implementation of the
double description method.  This matches our experimental
observations---whereas most optimizations are consistent in the way they
reduce running time and memory consumption, hyperplane ordering is
much more variable.  Sometimes we only achieve mild improvements through
ordering the hyperplanes, and other times we achieve spectacular results.

The reason that hyperplane ordering has such power is because, unlike any of
our other optimizations, it affects the {\em sizes} of the intermediate
vertex sets $V_i$.  By keeping these sets small and taming the exponential
explosion, we can achieve magnificent improvements in both running time and
memory usage; on the other hand, if we inadvertently encourage the
exponential explosion then the results can be disastrous.

It is therefore prudent to compare our ordering by position vectors
(Algorithm~\ref{a-sort}) against other standard orderings from the literature.
The other orderings we consider are:
\begin{itemize}
    \item {\em No ordering:} \\
    We do not order the hyperplanes at all, but merely process them in the
    order in which they are constructed.  Note that this is not a ``random''
    ordering; in the case of {\regina}, the hyperplanes are constructed in
    an order that (in a rough sense) moves the non-zero coefficients
    from the first tetrahedron to the last.  This is because each matching
    equation involves a face of the triangulation, and {\regina} happens
    to number faces internally in a similar manner.

    \item {\em Dynamic ordering:} \\
    Here we reorder the hyperplanes on the fly.  Recall from
    Algorithm~\ref{a-dd} that each hyperplane $H_i$ is used to divide
    the vertices of $V_{i-1}$ into sets $S_0$, $S_+$ and $S_-$, whereupon we
    embark upon the slow task of examining all pairs
    $\mathbf{u} \in S_+$ and $\mathbf{v} \in S_-$.
    With a dynamic ordering, we choose the hyperplane $H_i$ so that the
    number of pairs $|S_+| \times |S_-|$ is as small as possible.

    This is essentially the dynamic {\em mixcutoff} ordering defined by
    Avis et al.~\cite{avis97-howgood-compgeom}, adapted to make better
    use of the set $S_0$ (whose vertices do not need
    processing).\footnote{Strictly speaking,
    {\em mixcutoff} chooses the hyperplane that makes $S_+$ and $S_-$ the
    most unbalanced.}  Other dynamic orderings appear in the literature,
    notably {\em mincutoff} and {\em maxcutoff}
    \cite{avis97-howgood-compgeom,fukuda96-doubledesc}, but these are defined
    for intersections of half-spaces and are less relevant for intersections
    of hyperplanes.

    \item {\em Lexicographic ordering:} \\
    With lexicographic ordering we simply sort the hyperplanes by their
    coefficient vectors, possibly after performing some normalization.
    Fukuda and Prodon report good results using this method
    \cite{fukuda96-doubledesc}.

    Lexicographic orderings are typically defined for intersections of
    half-spaces, where the sign of each vector is well-defined.  Since we
    are dealing with intersections of hyperplanes, sign does not matter
    (so the coefficient vector $-\mathbf{m}$ is just as good as $\mathbf{m}$).

    We consider two ways of choosing the sign of each vector:
    \begin{itemize}
        \item {\em Positive first}, where we ensure that the first non-zero
        entry in each coefficient vector is positive;
        \item {\em Random signs}, where the sign of each vector is selected
        at random.
    \end{itemize}
\end{itemize}

\begin{figure}[htb]
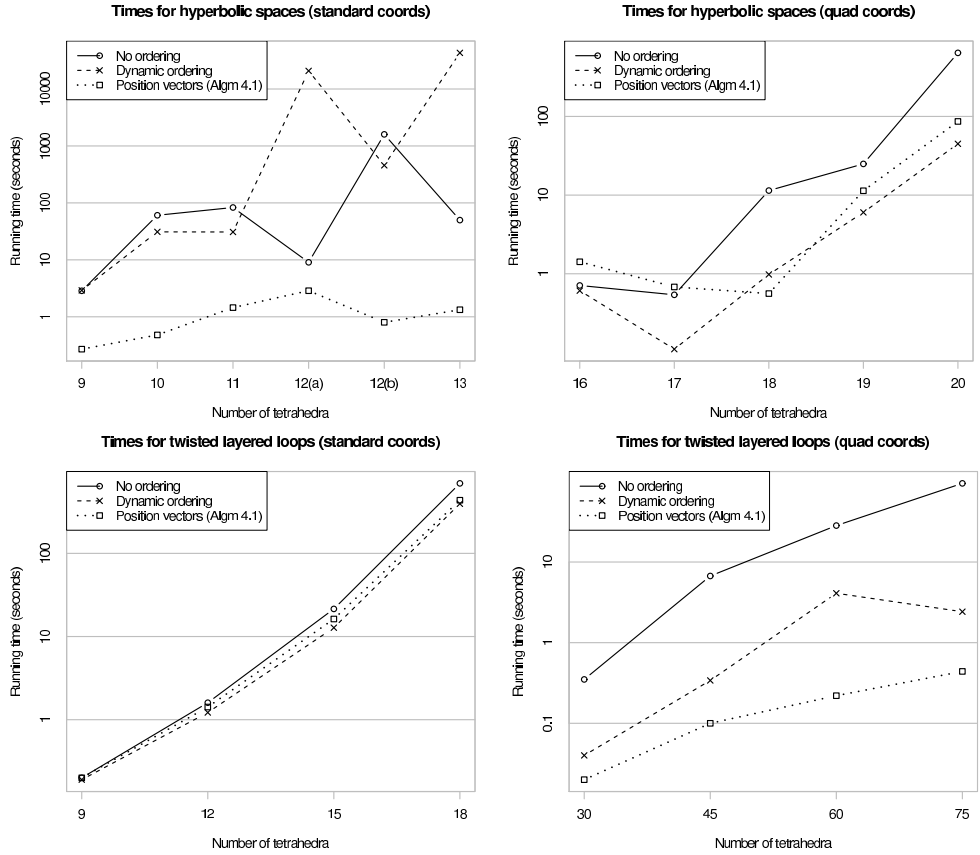

    \trialplots{order-orig-time}
    \caption{Running times for various hyperplane orderings (set 1/2)}
    \label{fig-trial-order-orig-time}
\end{figure}

\begin{figure}[htb]
    \trialplots{order-lex-time}
    \caption{Running times for various hyperplane orderings (set 2/2)}
    \label{fig-trial-order-lex-time}
\end{figure}

The running times for various lexicographic orderings are presented in
Figures~\ref{fig-trial-order-orig-time} and~\ref{fig-trial-order-lex-time}.
Figure~\ref{fig-trial-order-orig-time} compares our Algorithm~\ref{a-sort}
against no ordering and dynamic ordering, and
Figure~\ref{fig-trial-order-lex-time} compares Algorithm~\ref{a-sort}
against both variants of the lexicographic ordering.
Both figures again use a log scale (with each horizontal bar representing a
factor of ten), and the algorithms incorporate all of our other improvements
(bitmasks, cache optimization, dimensional filtering and inner product
representation).

It is pleasing to see that our Algorithm~\ref{a-sort} performs better than
the others in most cases, and is the only ordering that performs
{\em consistently} well.  The only serious competitor is the dynamic
ordering, which performs a little better in some cases; however, for some of
the hyperbolic spaces the dynamic ordering runs $10\,000$ times slower.

As a final note, Figure~\ref{fig-trial-order-lex-time} is missing a
data point.  This is because the {\em random first} lexicographic ordering
for the $n=18$ twisted layered loop was stopped manually after two days;
extrapolation suggests that it could have run for weeks before finishing.

\section{Conclusion} \label{s-conc}

In this paper we outline the standard algorithm for enumerating normal
surfaces in a 3-manifold triangulation, by combining the double
description method of Motzkin et al.~with the vertex filtering method
of Letscher.  Following this we describe four optimizations:
\begin{itemize}
    \item {\em Bitmasks and cache optimization}, which are well-known
    implementation techniques that can be applied to the double description
    method;
    \item {\em Hyperplane sorting}, where we order the matching hyperplanes
    according to their position vectors;
    \item {\em Dimensional filtering}, where we extend a result of Fukuda
    and Prodon to avoid processing certain pairs of vertices;
    \item {\em The inner product representation}, where we store only
    essential properties of the vertices instead of the full vertex
    coordinates.
\end{itemize}

We find that all of these techniques are successful in reducing running time,
with dimensional filtering the weakest (though still effective in most
cases) and hyperplane sorting the strongest (sometimes cutting running time
by several orders of magnitude).  The optimizations that focus on memory
are also successful in reducing memory consumption by significant factors
(though not as large as running time).  Furthermore, the hyperplane ordering
that we define here performs consistently well against other orderings from
the literature.

Whilst these results are extremely promising, readers are encouraged to try
these techniques for themselves---as other authors have noted, the
performance of the double description method is highly variable,
and different examples can reward or penalize different optimizations
\cite{avis97-howgood-compgeom,fukuda96-doubledesc}.  Nevertheless, the
techniques presented here are found to perform consistently well, and are
offered as a basis for further optimizations.

\bibliographystyle{amsplain}
\bibliography{pure}

\vspace{1cm}
\noindent
Benjamin A.~Burton \\
Department of Mathematics, SMGS, RMIT University \\
GPO Box 2476V, Melbourne, VIC 3001, Australia \\
(bab@debian.org)

\end{document}